\def\C{\mathbb C}
\def\R{\mathbb R}
\newtheorem{thm}{Theorem}[section]
\newtheorem{lem}[thm]{Lemma}
\newtheorem{cor}[thm]{Corollary}
\newtheorem{prop}[thm]{Proposition}
\numberwithin{equation}{section}
\theoremstyle{remark}
\newtheorem{rem}{Remark}[section]
\theoremstyle{definition}
\newtheorem{df}{Definition}[section]
\DeclareMathOperator{\diag}{diag}
\DeclareMathOperator{\bdiag}{b-diag}
\DeclareMathOperator{\spec}{spec}
\begin{document}
\author{Kay Jachmann}
\address{Institute of Applied Analysis, TU Bergakademie Freiberg, 09596 Freiberg, Germany}
\email{jachmann@math.tu-freiberg.de}
\author{Jens Wirth}
\address{Department of Mathematics, Imperial College London, 180 Queen's Gate, London SW7 2AZ, UK}
\thanks{The second author is supported by EPSRC with grant EP/E062873/1}
\email{j.wirth@imperial.ac.uk}
\title{Diagonalisation schemes and applications}
\begin{abstract}
These notes develop aspects of perturbation theory of matrices related to so-called diagonalisation schemes. Primary focus is on constructive tools to derive asymptotic expansions for small/large parameters of eigenvalues and eigenprojections of families of matrices depending upon real/complex parameters.

Applications of the schemes in different frameworks are also discussed and references to further applications given. 
\end{abstract}
\subjclass[2000]{Primary 47A56; Secondary 34E05, 35M10, 65F15, 80A17}
\maketitle

\tableofcontents

\section{Introduction}
In various situations one is confronted with the question of determining eigenvalues and eigenprojections of matrices uniform in certain parameters. The available tools for answering this question range from hand calculation (which is fine if matrices are small, i.e., have size $2\times 2$) to strong abstract methods of perturbation theory providing analytic formulas (but no concrete answer in most cases). 
In this note we want to collect results on a particularly useful constructive scheme for calculating
asymptotic expansions for diagonalisers and therefore also eigenvalues and eigenprojections.  
The method is flexible enough to generalise to applications to systems of differential equations or matrices of pseudo-differential operators. 

The approaches presented here generalise those used by Taylor \cite{Taylor:1975}, Yagdjian \cite{Yagdjian:1997}, Wang  \cite{Wang:2003}, \cite{Wang:2003a} and others, including the authors \cite{Wirth:2007b}, \cite{Jachmann:2008}.

The paper is organised as follows. First we will recall in Section~\ref{sec2.1} the well-known case of matrices with distinct eigenvalues and their small perturbations. In Section~\ref{sec2.2} we will present the general multi-step diagonalisation scheme, and its relations to uniformly diagonable matrices will be discussed in Section~\ref{sec2.3}.  Section~\ref{sec3} is devoted to applications of the schemes in different situations.

\section{Diagonalisation schemes}\label{sec2}
\subsection{Non-degenerate matrix families and the standard scheme}\label{sec2.1} We start our presentation by recollecting some well-known aspects from perturbation theory in combination with a merely classical proof using a diagonalisation technique. Let for this $A:\R\to\C^{m\times m}$ be a continuous matrix-valued function depending upon a real (or for some applications also complex) parameter $\rho$ and assume that as $\rho\to0$ the family of matrices has a full asymptotic expansion
\begin{equation}\label{eq:1}
   A(\rho) \sim A_0 + \rho A_1 + \rho^2 A_2 + \cdots,\qquad\rho\to0,
\end{equation}
i.e.,
\begin{equation}
   A(\rho) = A_0 + \rho A_1 + \rho^2 A_2 + \cdots + \rho^N A_N + \mathcal O(\rho^{N+1}),\qquad \forall N,
\end{equation}
for certain (uniquely determined) matrices $A_i\in\C^{m\times m}$. In the case of complex parameters this is just slightly weaker than assuming analyticity of $A$ near $\rho=0$; in the case of real parameters the assumption follows from smoothness of $A$ in $\rho=0$.

Now one might ask how eigenvalues and eigenprojections of the matrices $A(\rho)$ depend on $\rho$
as $\rho\to0$. It is well-known that continuity of $A(\rho)$ implies that eigenvalues depend continuously on $\rho$, but in order to conclude more we have to make assumptions on $A(\rho)$. It is reasonable to define a non-degenerate matrix family as one with distinct eigenvalues.

\begin{df}\label{df21}
We call $A(\rho)$ {\em non-degenerate} in $\rho=0$ if $A_0$ has $m$ distinct eigenvalues.
\end{df}

\begin{thm}\label{thm21}
Assume that $A(\rho)$ is non-degenerate in $\rho=0$. Then there exist uniformly bounded families of invertible matrices $M(\rho)$ with uniformly bounded inverse having full asymptotic expansions as 
$\rho\to0$ and satisfying
\begin{equation}\label{eq:2.2}
   A(\rho) M(\rho) - M(\rho)\Lambda(\rho) = \mathcal O(\rho^N),\qquad \forall N,
\end{equation}
for a diagonal matrix $\Lambda(\rho)$.
\end{thm}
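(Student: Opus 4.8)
The plan is to build $M(\rho)$ and $\Lambda(\rho)$ as asymptotic power series by solving, order by order, a sequence of linear matrix equations, and then to realise these formal series by honest families. As a preliminary normalisation, note that $A_0$ has $m$ distinct eigenvalues and is therefore diagonalisable: fix an invertible $M_0$ with $M_0^{-1}A_0M_0=\Lambda_0=\diag(\lambda_1,\dots,\lambda_m)$ and $\lambda_i\neq\lambda_j$ for $i\neq j$. Passing to $M_0^{-1}A(\rho)M_0$ (and restoring the factor $M_0$ at the very end) I may assume $A_0=\Lambda_0$ is already diagonal. Then make the ansatz
\begin{equation}
  M(\rho)\sim I+\rho M^{(1)}+\rho^2 M^{(2)}+\cdots,\qquad
  \Lambda(\rho)\sim\Lambda_0+\rho\Lambda_1+\rho^2\Lambda_2+\cdots
\end{equation}
with each $\Lambda_k$ diagonal, substitute into $A(\rho)M(\rho)=M(\rho)\Lambda(\rho)$, and equate coefficients of $\rho^k$.

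The coefficient equation at order $\rho^k$ takes the form
\begin{equation}
  \Lambda_0 M^{(k)}-M^{(k)}\Lambda_0=\Lambda_k-B_k,
\end{equation}
where $B_k$ depends only on $A_1,\dots,A_k$ and on $M^{(1)},\dots,M^{(k-1)}$, $\Lambda_1,\dots,\Lambda_{k-1}$, so the recursion is well founded. Entrywise $(\Lambda_0X-X\Lambda_0)_{ij}=(\lambda_i-\lambda_j)X_{ij}$, so the left-hand side is the ``off-diagonal'' operator: its kernel is the diagonal matrices and, by non-degeneracy of $A_0$, its range is exactly the matrices with vanishing diagonal. Hence one is forced to choose $\Lambda_k$ as the diagonal part of $B_k$, after which $\Lambda_k-B_k$ lies in the range and the off-diagonal entries of $M^{(k)}$ are uniquely determined by dividing by $\lambda_i-\lambda_j$; the diagonal part of $M^{(k)}$ is free and I normalise it to $0$. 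This determines all $M^{(k)}$ and $\Lambda_k$ uniquely, and the diagonal entries of $\Lambda(\rho)$ are then the asymptotic expansions of the eigenvalues.

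It remains to pass from the formal series to genuine families. For real $\rho$, Borel's lemma yields a smooth matrix function $M(\rho)$, defined near $\rho=0$, with Taylor series $I+\sum_{k\geq1}\rho^kM^{(k)}$, and similarly a diagonal $\Lambda(\rho)$; by construction $A(\rho)M(\rho)-M(\rho)\Lambda(\rho)$ has identically vanishing asymptotic expansion, i.e.\ is $\mathcal O(\rho^N)$ for every $N$. For complex $\rho$, where no Borel lemma is available, one instead uses for each fixed $N$ the polynomial truncation of the series, which already gives the desired $\mathcal O(\rho^N)$ bound. Since $M(0)=I$ in the normalised coordinates, continuity shows $M(\rho)$ is invertible with uniformly bounded inverse for $\rho$ in a neighbourhood of $0$; undoing the conjugation by $M_0$ gives the assertion, with $\Lambda(\rho)$ still diagonal.

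The only genuine obstacle is the solvability of the order-$\rho^k$ equation, and this is precisely where the hypothesis enters: distinct eigenvalues of $A_0$ make $X\mapsto\Lambda_0X-X\Lambda_0$ bijective on off-diagonal matrices, which is what lets the diagonal part of the equation fix $\Lambda_k$ while the off-diagonal part fixes $M^{(k)}$. Everything else — the bookkeeping of the terms collected in $B_k$, the Borel resummation, and the persistence of invertibility under small perturbations — is routine.
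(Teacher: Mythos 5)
Your proposal is correct and is essentially the paper's proof. You normalise to $A_0=\Lambda_0$ first and phrase the recursion as coefficient-matching in a formal series, while the paper carries $M_0$ along and tracks the error matrix $B^{(k)}(\rho)=A(\rho)M_{k-1}(\rho)-M_{k-1}(\rho)\Lambda_{k-1}(\rho)$, but the resulting order-$\rho^k$ equation, the kernel/range analysis of $X\mapsto[\Lambda_0,X]$, the choice $\Lambda^{(k)}=\diag B_k$, the normalisation $\diag M^{(k)}=0$, the Borel-type resummation, and the openness-of-$GL_m$ argument for invertibility all coincide.
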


The diagonal matrix $\Lambda(\rho)$ coincides to arbitrary order with the (continuous) diagonal matrix containing the eigenvalues of $A(\rho)$. It has an asymptotic expansion and the corresponding coefficients are determined uniquely. Although the above statement looks rather weak, it is of particular interest for us. We will show how to construct the diagonaliser $M(\rho)$ using a recursion scheme and how the aforementioned uniform bounds and the asymptotic expansion of $\Lambda(\rho)$ arise naturally within the construction. 

The construction forms the core of the more involved multi-step scheme introduced later in Section~\ref{sec2.2} and is the key idea in the diagonalisation-based approaches of Yagdjian, Reissig  and co-authors, see e.g., \cite{Yagdjian:1997}, \cite{Reissig:2000}, \cite{Reissig:2005}, \cite{Kubo:2003}. For more detailed discussions on applications we refer to Section~\ref{sec3}.

\begin{proof}(of Theorem~\ref{thm21}) It is enough to prove that for any number $N$ there exists an interval $I_N=(-\epsilon_N,\epsilon_N)$, a uniformly bounded and invertible matrix function $M_N(\rho)$, $\rho\in I_N$, and a diagonal matrix $\Lambda_N(\rho)$, $\rho\in I_N$, such that 
\begin{equation}\label{eq:23}
   A(\rho)M_N(\rho)-M_N(\rho)\Lambda_N(\rho) = \mathcal O(\rho^{N+1})
\end{equation}
and $M_{N+1}(\rho)-M_N(\rho)=\mathcal O(\rho^{N+1})$. Then any two functions $M(\rho)$, $\Lambda(\rho)$ coinciding with all the $M_N(\rho)$, $\Lambda_N(\rho)$ up to the corresponding orders satisfy the above theorem. But this just means we need to construct functions subject to asymptotic expansions, which is a standard argument of asymptotic analysis.

We are going to construct $M_N(\rho)$ and $\Lambda_N(\rho)$ recursively, and so assume that
they can be written as
\begin{subequations}
\begin{align}
    M_N(\rho) &= M_0(I +\rho M^{(1)}+\rho^2M^{(2)}+\cdots +\rho^N M^{(N)} ),\\
    \Lambda_N(\rho)&=\Lambda_0+\rho\Lambda^{(1)}+\cdots+\rho^{N}\Lambda^{(N)}
\end{align}
\end{subequations}
with certain coefficients $M_0$, $\Lambda_0$, $M^{(j)}$ and $\Lambda^{(j)}$. It is clear by \eqref{eq:23}
with $N=0$ that $A_0M_0=M_0\Lambda_0$ and so $M_0$ must be the diagonaliser of the main part $A_0$ and $\Lambda_0$ the corresponding diagonal matrix consisting of all eigenvalues of $
A_0$.

{\sl Step 1.} We determine a diagonaliser $M_0$ of $A_0$ and set $\tilde A(\rho)=M_0^{-1}A(\rho)M_0$.
The new matrix family $\tilde A(\rho)$ has the asymptotic expansion $\tilde A(\rho)\sim \Lambda_0+\rho\tilde A_1+\rho^2\tilde A_2+\cdots$ as $\rho\to0$ with coefficients $\tilde A_k=M_0^{-1}A_kM_0$.  

{\sl Step 2.} Assume we have already determined $M_{k-1}(\rho)$ and $\Lambda_{k-1}(\rho)$ for a certain number $k=1,2,\ldots$. Then we denote 
\begin{equation}\label{eq:2.5}
   B^{(k)}(\rho) =  A(\rho) M_{k-1}(\rho)-M_{k-1}(\rho) \Lambda_{k-1}(\rho)  = \mathcal O(\rho^{k}).
\end{equation}
It is evident that this matrix family has a full asymptotic expansion as $\rho\to0$, and it makes sense to 
set $\tilde B^{(k)} = \lim_{\rho\to0} \rho^{-k} M_0^{-1} B^{(k)}(\rho)$. We now construct the next coefficient matrices. For this we set $\Lambda^{(k)} = \diag \tilde B^{(k)}$, and take $M^{(k)}$ to be the solution of the Sylvester equation
\begin{equation}\label{eq:2.6}
   [\Lambda_0, M^{(k)} ] + \tilde B^{(k)}-\diag \tilde B^{(k)} = 0
\end{equation}
with vanishing diagonal entries. Equation~\eqref{eq:2.6} is soluble for any $\tilde B^{(k)}$, because $\Lambda_0=\diag(\lambda_{0,1},\ldots,\lambda_{0,n})$ has distinct entries, and its solution is given explicitly by
\begin{equation}\label{eq:2.7}
   \big(M^{(k)}\big)_{ij} = \begin{cases} -\frac{\big(\tilde B^{(k)}\big)_{ij}}{\lambda_{0,i}-\lambda_{0,j}}, \qquad & i\ne j,\\0, \qquad & i=j .\end{cases}
\end{equation}
That these are indeed the right choices for the matrices follows from
\begin{align}\label{eq:2.9}
   B^{(k+1)}(\rho) &= B^{(k)}(\rho) + \rho^{k} \left(A(\rho) M_0M^{(k)}- M_0 M^{(k)}\Lambda_k(\rho)-M_{k-1}(\rho)\Lambda^{(k)}\right)\notag\\
   &=\rho^k M_0 \left( \tilde B^{(k)} +[\Lambda_0,M^{(k)}] - \Lambda^{(k)} \right) + \mathcal O(\rho^{k+1})
   = \mathcal O(\rho^{k+1}),
\end{align}
by \eqref{eq:2.6} and so the construction can proceed recursively. 

It remains to check invertibility of $M_N(\rho)$ on a suitable $I_N$. The matrix $M_0$ is invertible; the second factor of the form $I+\mathcal O(\rho)$ and tends to the identity as $\rho\to0$. Since the group of invertible matrices is open in $\C^{m\times m}$, this implies invertibility of $M_N(\rho)$ for sufficiently small $I_N$.  
\end{proof}

\begin{rem}
Determining the diagonaliser $M_0$ in {\sl Step 1} of the scheme is the only `painful' step of the approach and requires considerable effort. Everything else is explicit and consists of operations acting on entries of matrices (in \eqref{eq:2.7}) or matrix multiplications / additions (in \eqref{eq:2.5}). 
\end{rem}
\begin{rem}
The intervals $I_N$ will in general shrink as $N$ tends to infinity. This is different if $A(\rho)$ is analytic in $\rho$ near $0$ and the corresponding asymptotic series converge uniformly on a small interval (see e.g. \cite[Chapter II]{Kato:1980}). In this case \eqref{eq:2.2} implies that the error term is actually 0.
\end{rem}

For completeness we also mention the following spectral bound, which can be used to estimate the eigenvalues of $A(\rho)$ in finitely many steps of the diagonalisation scheme. 

\begin{prop}\label{prop:22}
Representation \eqref{eq:2.5} implies 
\begin{equation}\label{eq:29}
  \mathrm{dist}\big( \spec A(\rho), \spec \Lambda_{k-1} (\rho) \big) \le \|M_{k-1}^{-1}(\rho)B^{(k)}(\rho)\|
  =  \mathcal O(\rho^{k}).
\end{equation}
\end{prop}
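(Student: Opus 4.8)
The plan is to read \eqref{eq:29} as a quantitative Bauer--Fike estimate for the similarity transform implicit in \eqref{eq:2.5}. Working on the interval $I_{k-1}$ on which $M_{k-1}(\rho)$ is invertible (as produced by the construction underlying Theorem~\ref{thm21}), I would first rewrite \eqref{eq:2.5} as
\begin{equation*}
  M_{k-1}^{-1}(\rho) A(\rho) M_{k-1}(\rho) = \Lambda_{k-1}(\rho) + E(\rho),\qquad E(\rho) := M_{k-1}^{-1}(\rho) B^{(k)}(\rho).
\end{equation*}
Since the spectrum is invariant under conjugation, $\spec A(\rho) = \spec\big(\Lambda_{k-1}(\rho) + E(\rho)\big)$, so the task reduces to comparing the spectrum of the \emph{diagonal} matrix $\Lambda_{k-1}(\rho)$ with that of its additive perturbation by $E(\rho)$.

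The core step is an elementary resolvent estimate. Fix $\rho$ and let $\mu \in \spec\big(\Lambda_{k-1}(\rho) + E(\rho)\big)$. If $\mu \in \spec \Lambda_{k-1}(\rho)$ there is nothing to prove; otherwise pick an eigenvector $v \ne 0$ and rearrange $\big(\Lambda_{k-1}(\rho) + E(\rho) - \mu\big) v = 0$ into $v = -\big(\Lambda_{k-1}(\rho) - \mu\big)^{-1} E(\rho) v$. Taking norms gives $1 \le \big\| \big(\Lambda_{k-1}(\rho) - \mu\big)^{-1} \big\|\, \|E(\rho)\|$, and because $\Lambda_{k-1}(\rho)$ is diagonal, in the spectral norm $\big\|\big(\Lambda_{k-1}(\rho) - \mu\big)^{-1}\big\| = 1/\mathrm{dist}\big(\mu, \spec \Lambda_{k-1}(\rho)\big)$; maximising over $\mu \in \spec A(\rho)$ yields the first inequality in \eqref{eq:29}.

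It then remains to read off the order: by \eqref{eq:2.5} we have $B^{(k)}(\rho) = \mathcal O(\rho^k)$, and the uniform boundedness of $M_{k-1}^{-1}(\rho)$ on $I_{k-1}$ — exactly what the recursion of Theorem~\ref{thm21} guarantees — turns $\|E(\rho)\| \le \|M_{k-1}^{-1}(\rho)\|\,\|B^{(k)}(\rho)\|$ into $\mathcal O(\rho^k)$. I do not expect a genuine obstacle here; the only points that require care are that the resolvent identity for $\Lambda_{k-1}(\rho)$ must be taken in the spectral (operator $2$-)norm so that the resolvent norm equals the reciprocal distance exactly, and that everything is restricted to the interval $I_{k-1}$ on which $M_{k-1}(\rho)$ and its inverse are controlled.
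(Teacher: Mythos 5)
Your proof is correct and is essentially the paper's argument in contrapositive form: the paper shows that any $\zeta$ with $\mathrm{dist}(\zeta,\spec\Lambda_{k-1}(\rho))>\|M_{k-1}^{-1}(\rho)B^{(k)}(\rho)\|$ lies in the resolvent set of $A(\rho)$ via the resolvent identity and the normality of $\Lambda_{k-1}(\rho)$, whereas you start from an eigenvector of $A(\rho)$ and derive $1\le\|(\Lambda_{k-1}(\rho)-\mu)^{-1}\|\,\|E(\rho)\|$, which is the same resolvent bound read in reverse. The final $\mathcal O(\rho^k)$ step, using the uniform bound on $M_{k-1}^{-1}(\rho)$ from Theorem~\ref{thm21}, matches the paper's as well.
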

\begin{proof}
The diagonal matrix $\Lambda_{k-1}(\rho)$ is normal, and so its resolvent satisfies the bound
\begin{equation}
  \|(\zeta-\Lambda_{k-1}(\rho))^{-1} \| \le \frac1{ \mathrm{dist} \big(\zeta,\spec \Lambda_{k-1}(\rho) \big)}.
\end{equation}  
Thus for any $\zeta\in\C$ with $\mathrm{dist}(\zeta,\spec \Lambda_{k-1}(\rho))> \|M_{k-1}^{-1}(\rho)B^{(k)}(\rho)\|$
the right-hand side of the resolvent identity
\begin{multline}
  \big (\zeta-\Lambda_{k-1}(\rho) \big)^{-1} \big(\zeta - M_{k-1}^{-1}(\rho)A(\rho)M_{k-1}(\rho) \big) \\= 
   I -\big (  \zeta-\Lambda_{k-1}(\rho) \big)^{-1} M_{k-1}^{-1}(\rho)B^{(k)}(\rho)
\end{multline}
is invertible. But this implies invertibility of the left-hand side and so that $\zeta$ belongs to the resolvent set of $A(\rho)$. 
\end{proof}

\begin{rem}
The bound is almost optimal. If $\rho$ is sufficiently small the right-hand side of the estimate involves essentially all entries of $\tilde B^{(k)}$, while the next step of the diagonalisation brings in the diagonal entries of $\tilde B^{(k)}$ as new coefficients. So the estimate is sharp if $\tilde B^{(k)}$ has no exceptionally large off-diagonal entries.
\end{rem}

Another interesting consequence is that the constructed matrices $M_N(\rho)$ allow to approximate the eigenprojections. To make this precise we consider the eigenvalue $\lambda_j(\rho)$ with its eigenprojection $P_{\lambda_j(\rho)}$. It corresponds to the $j$-th diagonal entry of the matrix $\Lambda_N(\rho)$. For the diagonal matrix it is evident that the corresponding eigenprojection is just $e_j\otimes e_j$ for the $j$-th basis vector $e_j$ of the standard basis. 
The eigenprojection $P_{\lambda_j(\rho)}$ is determined by $  \big(\lambda_j(\rho)-A(\rho)\big) P_{\lambda_j(\rho)} =0 $ (and the fact that it is a projection, i.e.,  $P_{\lambda_j(\rho)} ^2= P_{\lambda_j(\rho)}$).

\begin{prop}\label{prop:23}
 The projection $M_N(\rho)   (e_j\otimes e_j) M_N^{-1}(\rho)$ approximates the eigenprojection $P_{\lambda_j(\rho)}$, in the sense that 
\begin{equation}
  \big(\lambda_j(\rho)-A(\rho)\big) M_N(\rho)   (e_j\otimes e_j) M_N^{-1}(\rho)= \mathcal O(\rho^{N+1})
\end{equation}
and
\begin{equation}
   \| P_{\lambda_j(\rho)} - M_N(\rho)   (e_j\otimes e_j) M_N^{-1}(\rho) \| = \mathcal O(\rho^{N+1}).
\end{equation}
\end{prop}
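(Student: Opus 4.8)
The plan is to work in the coordinates supplied by the diagonalisation scheme and then transport the estimates back. Write $Q_j(\rho) = M_N(\rho)(e_j\otimes e_j)M_N^{-1}(\rho)$; this is manifestly a projection since $(e_j\otimes e_j)^2 = e_j\otimes e_j$. For the first claim I would start from the defining relation \eqref{eq:2.2} in the form $A(\rho)M_N(\rho) = M_N(\rho)\Lambda_N(\rho) + \mathcal O(\rho^{N+1})$, where $\Lambda_N(\rho)$ is the diagonal matrix built by the scheme. Multiplying on the right by $e_j\otimes e_j$ and using that $\Lambda_N(\rho)(e_j\otimes e_j) = \Lambda_{N,j}(\rho)\, e_j\otimes e_j$ with $\Lambda_{N,j}(\rho)$ the $j$-th diagonal entry, one gets $A(\rho)M_N(\rho)(e_j\otimes e_j) = \Lambda_{N,j}(\rho)M_N(\rho)(e_j\otimes e_j) + \mathcal O(\rho^{N+1})$. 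Multiplying by $M_N^{-1}(\rho)$ on the right (bounded uniformly by Theorem~\ref{thm21}) yields $A(\rho)Q_j(\rho) = \Lambda_{N,j}(\rho)Q_j(\rho) + \mathcal O(\rho^{N+1})$. Finally I replace $\Lambda_{N,j}(\rho)$ by the true eigenvalue $\lambda_j(\rho)$: by the remark following Theorem~\ref{thm21} (or directly by Proposition~\ref{prop:22}) these agree up to $\mathcal O(\rho^{N+1})$, and since $Q_j(\rho)$ is uniformly bounded the substitution costs only $\mathcal O(\rho^{N+1})$. This gives $\big(\lambda_j(\rho)-A(\rho)\big)Q_j(\rho) = \mathcal O(\rho^{N+1})$, the first estimate.

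For the second estimate I would use a resolvent/Riesz-projection argument. Fix a small circle $\gamma_j$ in $\C$ enclosing $\lambda_j(0)$ and no other eigenvalue of $A_0$; by non-degeneracy and continuity of eigenvalues, for $\rho$ small $\gamma_j$ encloses exactly $\lambda_j(\rho)$ and separates it from the rest of $\spec A(\rho)$ by a distance bounded below uniformly in $\rho$. Then $P_{\lambda_j(\rho)} = \frac{1}{2\pi i}\oint_{\gamma_j}(\zeta - A(\rho))^{-1}\,d\zeta$. In the $M_N$-coordinates, $M_N^{-1}(\rho)A(\rho)M_N(\rho) = \Lambda_N(\rho) + R_N(\rho)$ with $R_N(\rho) = \mathcal O(\rho^{N+1})$ by \eqref{eq:2.2}. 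On $\gamma_j$ the resolvent $(\zeta - \Lambda_N(\rho))^{-1}$ is uniformly bounded (the diagonal entries stay uniformly away from $\gamma_j$), so a Neumann-series expansion gives $(\zeta - \Lambda_N(\rho) - R_N(\rho))^{-1} = (\zeta-\Lambda_N(\rho))^{-1} + \mathcal O(\rho^{N+1})$ uniformly on $\gamma_j$. Integrating, the Riesz projection of $M_N^{-1}(\rho)A(\rho)M_N(\rho)$ over $\gamma_j$ equals the Riesz projection of $\Lambda_N(\rho)$, which is exactly $e_j\otimes e_j$, up to $\mathcal O(\rho^{N+1})$. Conjugating back by $M_N(\rho)$ (again uniformly bounded with uniformly bounded inverse) turns this into $\|P_{\lambda_j(\rho)} - Q_j(\rho)\| = \mathcal O(\rho^{N+1})$.

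The step I expect to require the most care is the uniform separation of $\lambda_j(\rho)$ from the remaining spectrum and the consequent uniform bound on $(\zeta - \Lambda_N(\rho))^{-1}$ along $\gamma_j$: one must check that shrinking the interval $I_N$ if necessary keeps the contour $\gamma_j$ at a fixed positive distance from all diagonal entries of $\Lambda_N(\rho)$, which relies on $\Lambda_N(\rho)\to\Lambda_0$ and the distinctness of the entries of $\Lambda_0$. Everything else is bookkeeping with uniformly bounded factors and the already-established estimate \eqref{eq:2.2}. An alternative to the contour argument for the second estimate, avoiding resolvents entirely, is to note that $Q_j(\rho)$ is a projection satisfying $(\lambda_j(\rho)-A(\rho))Q_j(\rho) = \mathcal O(\rho^{N+1})$ and, symmetrically, $Q_j(\rho)(\lambda_j(\rho)-A(\rho)) = \mathcal O(\rho^{N+1})$ (from the left-sided version of the same computation), and then invoke the standard perturbation fact that a near-idempotent near-spectral projection differs from the genuine eigenprojection by the same order; I would use whichever is shorter to write out.
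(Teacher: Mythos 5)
Your argument for the first estimate is the same as the paper's: combine \eqref{eq:23} with Proposition~\ref{prop:22}, using that $\Lambda_N(\rho)(e_j\otimes e_j)$ is a scalar multiple of $e_j\otimes e_j$ and that the diagonal entry agrees with $\lambda_j(\rho)$ to order $\rho^{N+1}$. That part checks out and needs no further comment.

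For the second estimate you take a genuinely different route. The paper uses the Lagrange-type product formula
\begin{equation*}
   P_{\lambda_j(\rho)} = \prod_{i\ne j} \bigl(\lambda_i(\rho)-\lambda_j(\rho)\bigr)^{-1}\bigl(\lambda_i(\rho)-A(\rho)\bigr),
\end{equation*}
substitutes $A(\rho)=M_N(\rho)\Lambda_N(\rho)M_N^{-1}(\rho)+\mathcal O(\rho^{N+1})$ and $\lambda_i(\rho)=\Lambda_{N,i}(\rho)+\mathcal O(\rho^{N+1})$, and observes that the resulting product for the diagonal matrix $\Lambda_N(\rho)$ collapses exactly to $e_j\otimes e_j$; this is a purely algebraic finite product, with no contour or Neumann series. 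You instead use the Dunford/Riesz integral $P_{\lambda_j(\rho)}=\frac{1}{2\pi i}\oint_{\gamma_j}(\zeta-A(\rho))^{-1}\,\mathrm d\zeta$, conjugate to the $M_N$-coordinates, expand the resolvent around $\zeta-\Lambda_N(\rho)$, and integrate. Both proofs are correct and both ultimately rely on the same non-degeneracy input --- the paper needs distinct $\lambda_i(\rho)$ to keep the prefactors $(\lambda_i-\lambda_j)^{-1}$ uniformly bounded, you need the same separation to keep $\gamma_j$ at positive distance from the other diagonal entries of $\Lambda_N(\rho)$. The product formula is shorter and stays entirely within matrix algebra, which fits the explicit/constructive flavour of the paper; the contour argument is slightly heavier but would generalise without change to the block-diagonal (degenerate) setting of Section~\ref{sec2.2}, where one wants projections onto $\lambda$-groups rather than simple eigenvalues and the product formula no longer applies directly. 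Your closing remark about a near-idempotent perturbation argument is also a standard viable alternative, though it requires the left-sided estimate $Q_j(\rho)(\lambda_j(\rho)-A(\rho))=\mathcal O(\rho^{N+1})$ which you correctly flag.
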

\begin{proof}
The first formula is a straightforward consequence of \eqref{eq:23} and Proposition~\ref{prop:22}. For the second one we recall that the eigenprojections of a matrix can be represented in terms of the eigenvalues by the product formula
\begin{equation}
   P_{\lambda_j(\rho)} = \prod_{i\ne j} (\lambda_i(\rho)-\lambda_j(\rho))^{-1} (\lambda_i(\rho)-A(\rho)). 
\end{equation}
Plugging in \eqref{eq:23} and \eqref{eq:29}  for $A(\rho)$ and $\lambda_j(\rho)$, respectively, the second statement follows. In both cases the constants in the estimates can be calculated explicitly.
\end{proof} 

\subsection{Block-diagonalisation}\label{sec2.2} The main objective of this note is to discuss how to generalise the scheme from the proof of Theorem~\ref{thm21} to degenerate matrix functions and to replace the assumption of non-degeneracy by weaker assumptions, which are just enough to ensure the existence of an asymptotic expansion of eigenvalues and eigenprojections in {\em entire powers} of $\rho$. 

The main motivation for the approach goes back to Taylor \cite{Taylor:1975}  and Wang \cite{Wang:2003}, \cite{Wang:2003a} and applications to the decoupling of hyperbolic-parabolic coupled systems. The precise construction is taken from the PhD thesis of the first author, \cite{Jachmann:2008}. 

A short comment on notation: for matrices appearing recursively in the scheme we use an upper index $[j]$ to denote the level. Matrices which are final results of the consideration get lower indices according to their position and if these matrices are themselves families with asymptotic expansions we use upper indices $(j)$ to denote their $j$-th term. 

\subsubsection{Basic setting and diagonalisation modulo $\mathcal O(\rho^2)$} 
We assume that the matrix $A_0$ is diagonable, and denote by $M_0$ a diagonaliser of $A_0$, which arranges the eigenvalues in groups, i.e., 
\begin{equation}
   M_0^{-1} A_0 M_0 = \Lambda_0 = \diag(\lambda_1^{[0]},\cdots, \lambda_m^{[0]})
\end{equation}
and $\lambda_i^{[0]}=\lambda_j^{[0]}$, $i< j$,  implies $\lambda_i^{[0]}=\lambda_k^{[0]}$ for all $i\le k\le j$. We apply $M_0$ to the family $A(\rho)$:
\begin{equation}
   \tilde A^{[0]}(\rho) = M_0^{-1} A(\rho) M_0 \sim \Lambda_0 + \rho \tilde A_1^{[0]} + \rho^2\tilde A_2^{[0]}+\cdots,\qquad\rho\to0.
\end{equation}
We assume that $\Lambda_0$ is not just a multiple of the identity and try to follow the standard scheme as far as possible. If we consider the Sylvester equation $[\Lambda_0, X] = Y$,
we see that this can only be solved if $(Y)_{ij}=0$ whenever $\lambda_i=\lambda_j$. More formally, we introduce the notation $\Pi_0=(\pi_1,\ldots,\pi_k)$, $\pi_1+\cdots+\pi_k=m$, for the partition of eigenvalues, and write $i\sim_{\Pi_0}j$ if $\lambda_i^{[0]}=\lambda_j^{[0]}$. Furthermore, $\bdiag_{\Pi_0}$ selects the corresponding diagonal blocks out of a matrix / constructs a block-diagonal matrix of this form.  

Let $\tilde\Lambda_1=\bdiag_{\Pi_0} \tilde A_1^{[0]}$. Then we can solve the Sylvester equation 
\begin{equation}
[\Lambda_0,K^{[0]}_0]+\tilde A_1^{[0]} - \tilde\Lambda_1=0,
\end{equation}
and obtain as possible solution (cf. \eqref{eq:2.7})
\begin{equation}
   \big(K^{[0]}_0\big)_{ij} = \begin{cases} -\frac{\big(\tilde A_1^{[0]}\big)_{ij}}{\lambda_{i}^{[0]}-\lambda_{j}^{[0]}}, \qquad & i\not\sim_{\Pi_0} j,\\0, \qquad & i\sim_{\Pi_0}j .\end{cases}
\end{equation}
Furthermore, 
\begin{equation}
 \tilde A^{[0]}(\rho) (I+\rho K^{[0]}_0) -   (I+\rho K^{[0]}_0) (\Lambda_0+\rho\tilde\Lambda_1) 
 = \rho(  \tilde A_1^{[0]}+[\Lambda_0,K^{[0]}_0]  - \tilde\Lambda_1 ) + \mathcal O(\rho^2)
\end{equation}
is of order $\rho^2$. In order to get the matrix diagonalised modulo $\mathcal O(\rho^2)$ it remains to diagonalise $\tilde \Lambda_1$. For this we observe the following: if $\tilde\Lambda_1$ is diagonable, its diagonaliser lives in the subspaces corresponding to the blocks / the partition $\Pi_0$. On all these subspaces the matrix $\Lambda_0$ is a multiple of the identity (by definition of the partition). This implies that $\Lambda_0$ is invariant under all diagonalisers of $\tilde\Lambda_1$, provided they exist.

Thus we assume that $\tilde\Lambda_1$ is diagonable, and denote by $\tilde M_1$ a diagonaliser of $\tilde\Lambda_1$, arranging the eigenvalues into groups within the the partition $\Pi_0$. Then
\begin{equation}
  M_0 (I+ \rho K^{[0]}_0)\tilde M_1   
\end{equation}
diagonalises $A(\rho)$ modulo $\mathcal O(\rho^2)$ for small values of $\rho$. Hence we constructed\begin{align}
  \tilde A^{[1]}(\rho) &=   
\tilde M_1^{-1}(I+ \rho K^{[0]}_0)^{-1}\tilde A^{[0]}(\rho) (I+ \rho K^{[0]}_0)\tilde M_1 \notag\\
  & \sim \Lambda_0 + \rho \Lambda_1 + \rho^2\tilde A_2^{[1]}+\cdots,\qquad\rho\to0.
\end{align}
Assumptions we had to make were the diagonability of the two matrices $A_0$ and  $\tilde\Lambda_1=\bdiag_{\Pi_0} M_0^{-1}A_1M_0$. 
 
\begin{rem}\label{rem2.3}
Instead of diagonalising $\tilde\Lambda_1$ and stopping the procedure we could also apply the construction iteratively  and obtain a block-diagonaliser 
\begin{equation}
M_1(\rho) \sim I+\rho M^{(1)}+\rho^2 M^{(2)} +\cdots, 
\end{equation}
(with $M^{(1)}=K_0^{[0]}$ and the further terms obtained by a similar procedure to the standard scheme)
such that $M_1^{-1}(\rho) A(\rho)M_1(\rho)$ is $\Pi_0$-block-diagonal modulo $\bigcap_N \mathcal O(\rho^N)$.  
\end{rem} 

\subsubsection{The iterative scheme} We assume we already applied $k$ steps to diagonalise the given family modulo $\mathcal O(\rho^{k+1})$, i.e.,  we assume we are given 
\begin{equation}
  \tilde A^{[k]}(\rho) \sim \Lambda_0 + \rho \Lambda_1 + \cdots + \rho^k \Lambda_k +
  \rho^{k+1} \tilde A^{[k]}_{k+1}+\cdots,\quad \rho\to0,
\end{equation}
and in particular that the eigenvalues of $\Lambda_k$ are arranged into groups within the partition $\Pi_{k-1}$.
Associated to $\Lambda_k$ we have a new partition $\Pi_k$, which is a sub-partition of $\Pi_{k-1}$,
and a corresponding equivalence relation $\sim_{\Pi_k}$. Denoting the entries of $\Lambda_{k'}$   
as $\lambda^{[k']}_1$, \dots, $\lambda^{[k']}_m$ this means
\begin{equation}
  i\sim_{\Pi_k} j\qquad \Longleftrightarrow \qquad \forall k'\le k\;:\; \lambda^{[k']}_i=\lambda^{[k']}_j.
\end{equation}
Our strategy now is as follows. The matrix $\tilde A^{[k]}_{k+1}$ is a full matrix, which does not obey the special block structure determined by the filtration $\Pi_k$ of partitions, but we can construct step by step diagonaliser which eliminate off-diagonal terms modulo $\mathcal O(\rho^{k+2})$ with respect to the $\Pi_{k'}$, $k'\le k$. This can be done in $k+1$ sub-steps. 

For the $0$-th step let $A^{[k,0]}_{k+1} = \bdiag_{\Pi_0}  \tilde A^{[k]}_{k+1}$ and $K^{[k]}_0$ be the solution to the Sylvester equation
\begin{equation}
   [\Lambda_0, K^{[k]}_0] + \tilde A^{[k]}_{k+1} - A^{[k,0]}_{k+1}= 0.
\end{equation}
Then by construction
\begin{equation}
  \tilde A^{[k]}(\rho) (I+\rho^{k+1} K^{[k]}_0) - (I+\rho^{k+1} K^{[k]}_0) (\sum_{j=0}^{k}\rho^j\Lambda_j +\rho^{k+1} A^{[k,0]}_{k+1})
  \end{equation}
is of order $\mathcal O(\rho^{k+2})$, the last term on the right of $\Pi_0$-block-diagonal structure. 

In the $\ell$-th sub-step, $\ell=1,\ldots,k$, we denote $ A^{[k,\ell]}_{k+1} = \bdiag_{\Pi_\ell}  A^{[k,\ell-1]}_{k+1}$, and define $K^{[k]}_\ell$ to be a $\Pi_{\ell-1}$-block-diagonal solution of
\begin{equation}
   [\Lambda_\ell, K^{[k]}_\ell] + A^{[k,\ell-1]}_{k+1} - A^{[k,\ell]}_{k+1}=0.
\end{equation}
Again by construction and the commutation property $[\Lambda_{\ell'}, K^{[k]}_\ell]=0$ for
$\ell'<\ell$ (because $K^{[k]}_\ell$ acts only on the invariant subspaces related to $\Pi_{\ell-1}
$),  it follows that
\begin{equation}
  (\sum_{j=0}^{k}\rho^j\Lambda_j +\rho^{k+1} A^{[k,\ell-1]}_{k+1}) (I+\rho^{k+1-\ell} K^{[k]}_\ell) - (I+\rho^{k+1-\ell} K^{[k]}_\ell) (\sum_{j=0}^{k}\rho^j\Lambda_j +\rho^{k+1} A^{[k,\ell]}_{k+1})
  \end{equation}
is of order $\mathcal O(\rho^{k+2})$. The last term on the right has $\Pi_\ell$-block-structure.

Finally we obtain a block-diagonalisation up to $\Pi_k$-structure with last remaining term 
$\tilde\Lambda_{k+1} = A^{[k,k]}_{k+1}$. If we assume that this matrix is diagonable and denote a corresponding diagonaliser as $\tilde M_k$, it follows that
\begin{equation}
   M_0 (I+\rho M^{(1)}) \tilde M_1 \cdots (I+\rho^{k+1} K^{[k]}_0) \cdots (I+\rho K^{[k]}_k) \tilde M_k
\end{equation}
diagonalises $A(\rho)$ for small $\rho$ modulo $\mathcal O(\rho^{k+2})$.

\begin{rem}
If we had followed Remark~\ref{rem2.3} we could simplify this step (but with the cost of already determining the relevant matrices in the previous steps). If we always perform a perfect block-diagonalisation, the remainder term $\tilde A^{[k]}_{k+1}$ has $\Pi_{k-1}$-block-structure and can be treated with one diagonalisation hierarchy instead of $k+1$ sub-steps. 
\end{rem}

\subsubsection{A hierarchy of conditions and results}
We recall that in each step of the iterative scheme there appeared one assumption, namely that $\tilde\Lambda_{k+1}= A^{[k,k]}_{k+1}$ is diagonable. It is difficult to express these assumptions by conditions on the coefficients in the original expansion \eqref{eq:1}. We will abuse notation a little more and denote $\tilde\Lambda_0=A_0$. Then the following definition makes sense:

\begin{df}
The matrix family $A(\rho)$ satisfies Assumption {\bf (A$_n$)} if the matrices $\tilde\Lambda_k$  for $k=0,1,\ldots n$ are all diagonable.
\end{df}

\begin{prop}
If $A(\rho)$ satisfies {\bf (A$_n$)}, then there exists a small interval $I_n$ and a uniformly bounded family of invertible matrices $M_n(\rho)$, $\rho\in I_n$, with uniformly bounded inverse, such that
\begin{equation}
   M_n^{-1}(\rho)A(\rho)M_n(\rho) 
\end{equation}  
is diagonal modulo $\mathcal O(\rho^{n+1})$.
\end{prop}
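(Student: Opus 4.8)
The plan is to prove the proposition by induction on $n$, turning the (informally described) iterative scheme above into a genuine recursion. It is convenient to prove the slightly stronger statement that, under {\bf (A$_n$)}, one can choose $M_n(\rho)$ to possess a full asymptotic expansion as $\rho\to0$ (with invertible leading term $M_0$) and to satisfy
\[
  M_n^{-1}(\rho)A(\rho)M_n(\rho)\sim \Lambda_0+\rho\Lambda_1+\cdots+\rho^n\Lambda_n+\mathcal O(\rho^{n+1}),
\]
where each $\Lambda_j$ is diagonal and has its entries arranged into groups within the partition $\Pi_{j-1}$. From this the proposition follows at once, since the first $n+1$ coefficients of the expansion are then diagonal.

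\emph{Base case $n=0$.} Assumption {\bf (A$_0$)} is diagonability of $A_0=\tilde\Lambda_0$; taking $M_0(\rho)\equiv M_0$ a diagonaliser gives $M_0^{-1}A(\rho)M_0=\Lambda_0+\mathcal O(\rho)$, and $M_0$ is a constant invertible matrix, so everything is trivial.

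\emph{Inductive step.} Assume the claim for $n-1$. Since {\bf (A$_n$)} implies {\bf (A$_{n-1}$)}, we are given $M_{n-1}(\rho)$ on some $I_{n-1}$ with $\tilde A^{[n-1]}(\rho):=M_{n-1}^{-1}(\rho)A(\rho)M_{n-1}(\rho)\sim\Lambda_0+\rho\Lambda_1+\cdots+\rho^{n-1}\Lambda_{n-1}+\rho^n\tilde A^{[n-1]}_n+\cdots$, with $\Lambda_{n-1}$ grouped within $\Pi_{n-2}$, which fixes the refined partition $\Pi_{n-1}$. I now run the $n$ sub-steps $\ell=0,1,\dots,n-1$ with $k=n-1$: set $A^{[n-1,\ell]}_n=\bdiag_{\Pi_\ell}A^{[n-1,\ell-1]}_n$ (reading $A^{[n-1,-1]}_n=\tilde A^{[n-1]}_n$ and, for $\ell=0$, $\Pi_{-1}$ as the one-block partition) and solve the Sylvester equation $[\Lambda_\ell,K^{[n-1]}_\ell]+A^{[n-1,\ell-1]}_n-A^{[n-1,\ell]}_n=0$ for a $\Pi_{\ell-1}$-block-diagonal $K^{[n-1]}_\ell$; this is solvable with the explicit formula of \eqref{eq:2.7}, because within each $\Pi_{\ell-1}$-block the relevant diagonal entries of $\Lambda_\ell$ differ exactly for indices with $i\not\sim_{\Pi_\ell}j$. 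After these sub-steps the $\rho^n$-remainder is $\tilde\Lambda_n=A^{[n-1,n-1]}_n$, diagonable by {\bf (A$_n$)}; let $\tilde M_{n-1}$ be a diagonaliser arranging its entries into groups within $\Pi_{n-1}$, and put
\[
  M_n(\rho)=M_{n-1}(\rho)\,(I+\rho^{n}K^{[n-1]}_0)\cdots(I+\rho K^{[n-1]}_{n-1})\,\tilde M_{n-1}.
\]

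It remains to verify the expansion and the uniform bounds. For the expansion one checks, exactly as in the displayed identities of the iterative scheme, that conjugating $\sum_{j=0}^{n-1}\rho^j\Lambda_j+\rho^nA^{[n-1,\ell-1]}_n$ by $I+\rho^{n-\ell}K^{[n-1]}_\ell$ yields $\sum_{j=0}^{n-1}\rho^j\Lambda_j+\rho^nA^{[n-1,\ell]}_n$ modulo $\mathcal O(\rho^{n+1})$; the one point that genuinely needs care is the commutation identity $[\Lambda_{\ell'},K^{[n-1]}_\ell]=0$ for $\ell'<\ell$, valid because $K^{[n-1]}_\ell$ is $\Pi_{\ell-1}$-block-diagonal while $\Lambda_{\ell'}$ restricts to a multiple of the identity on every $\Pi_{\ell-1}$-block when $\ell'\le\ell-1$. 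Composing the $n$ sub-steps and then conjugating by the constant matrix $\tilde M_{n-1}$ (which, being $\Pi_{n-1}$-block-diagonal, leaves $\Lambda_0,\dots,\Lambda_{n-1}$ fixed and diagonalises $\tilde\Lambda_n$) produces $\Lambda_n=\tilde M_{n-1}^{-1}\tilde\Lambda_n\tilde M_{n-1}$ and the asserted expansion modulo $\mathcal O(\rho^{n+1})$; the full asymptotic expansion of $M_n^{-1}(\rho)A(\rho)M_n(\rho)$ follows since $M_n(\rho)$ is a finite product of factors each having such an expansion, with invertible leading term $M_0$. For uniformity, $M_n(\rho)$ is a finite product of the uniformly bounded, uniformly invertible family $M_{n-1}(\rho)$, the constant invertible $\tilde M_{n-1}$, and factors $I+\rho^{n-\ell}K^{[n-1]}_\ell$ tending to $I$ as $\rho\to0$; since the group of invertible matrices is open in $\C^{m\times m}$, shrinking $I_{n-1}$ to a suitable $I_n$ makes each factor invertible with uniformly bounded inverse, and the product inherits both bounds.

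The main obstacle is bookkeeping rather than a genuine difficulty: one has to keep the filtration $\Pi_0\supseteq\Pi_1\supseteq\cdots$ and the block-diagonality constraints on the $K^{[n-1]}_\ell$ straight, so that each Sylvester equation is solvable precisely on the off-diagonal blocks it needs to kill and the lower-order part $\sum_{j<n}\rho^j\Lambda_j$ is left untouched at every sub-step. Everything else — the orders of the remainders, and the invertibility argument — is routine and parallels the proof of Theorem~\ref{thm21}.
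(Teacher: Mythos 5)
Your proof is correct and takes essentially the same approach the paper intends: the proposition is stated without a written proof because the iterative scheme of the preceding subsection \emph{is} the proof, and you simply cast that scheme as a careful induction on $n$, supplying the details the paper leaves implicit (solvability of each Sylvester equation within the appropriate filtration blocks, the commutation $[\Lambda_{\ell'},K^{[n-1]}_\ell]=0$ for $\ell'<\ell$, invariance of $\Lambda_0,\dots,\Lambda_{n-1}$ under the $\Pi_{n-1}$-block-diagonal $\tilde M_{n-1}$, and the openness-of-$GL$ argument for uniform invertibility).
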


In Definition~\ref{df21} we introduced the notion of a non-degenerate family of matrices. This definition can be relaxed by the following weaker notion. 

\begin{df}
The matrix family $A(\rho)$ is called {\em non-degenerate of order $n$} in $\rho=0$ if it satisfies  {\bf (A$_{n}$)}, the matrix $\tilde\Lambda_n$ consists of  non-degenerate blocks and this number $n$ is minimal.
\end{df}

Obviously  this definition coincides with the old one for $n=0$. Furthermore,  non-degeneracy of order $n$ implies  {\bf (A$_{n'}$)} for all $n'\in\mathbb N$ and allows for perfect diagonalisation by our scheme. 

\begin{thm}
Assume that $A(\rho)$ is non-degenerate of order $n$ in $\rho=0$. Then the statement of Theorem~\ref{thm21} holds true. Furthermore, at least two eigenvalues of $A(\rho)$ coincide modulo $\mathcal O(\rho^n)$ in $\rho=0$.
\end{thm}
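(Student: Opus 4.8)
The plan is to reduce the theorem to two separate observations. The first statement—that the conclusion of Theorem~\ref{thm21} holds—should follow directly from the iterative scheme together with the standard scheme. Indeed, non-degeneracy of order $n$ means that Assumption {\bf(A$_n$)} holds and that $\tilde\Lambda_n$ consists of non-degenerate blocks; by the last Proposition we obtain a uniformly bounded invertible family $M_n(\rho)$ on some interval $I_n$ with uniformly bounded inverse such that $M_n^{-1}(\rho)A(\rho)M_n(\rho)$ is diagonal modulo $\mathcal O(\rho^{n+1})$, with leading terms $\Lambda_0+\rho\Lambda_1+\cdots+\rho^n\Lambda_n$. Because $\tilde\Lambda_n$ has non-degenerate blocks, after applying the diagonaliser $\tilde M_n$ of $\tilde\Lambda_n$ the partition $\Pi_n$ is the discrete partition: all $m$ diagonal entries of $\sum_{j\le n}\rho^j\Lambda_j$ are eventually pairwise distinct for small $\rho\ne0$, with separation of order $\rho^n$. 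From this point on the block structure is trivial, each Sylvester equation $[\Lambda_0,\cdot]=\cdot$ (or the analogue at higher levels) is uniquely solvable off the diagonal exactly as in \eqref{eq:2.7}, and the recursion of the standard scheme from the proof of Theorem~\ref{thm21} applies verbatim to produce, for every $N$, a uniformly bounded invertible $M_N(\rho)$ on a suitable $I_N$ with $A(\rho)M_N(\rho)-M_N(\rho)\Lambda_N(\rho)=\mathcal O(\rho^{N+1})$ for a diagonal $\Lambda_N(\rho)$. The usual asymptotic-series patching argument then yields $M(\rho)$ and $\Lambda(\rho)$ as in \eqref{eq:2.2}.

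For the second statement I would argue by minimality of $n$. Suppose, for contradiction, that all eigenvalues of $A(\rho)$ are pairwise distinct modulo $\mathcal O(\rho^n)$, i.e. $\lambda_i(\rho)-\lambda_j(\rho)$ is not $\mathcal O(\rho^n)$ for any $i\ne j$. The diagonal matrix $\Lambda(\rho)\sim\Lambda_0+\rho\Lambda_1+\cdots$ produced above coincides with the eigenvalues to infinite order, so this means that for every pair $i\ne j$ there is some $k'\le n-1$ with $\lambda_i^{[k']}\ne\lambda_j^{[k']}$; equivalently, the partition $\Pi_{n-1}$ is already the discrete partition. But then $\tilde\Lambda_{n-1}=A^{[n-1,n-1]}_{n-1}$ is $\Pi_{n-1}$-block-diagonal with blocks of size one, hence trivially diagonable with non-degenerate ($1\times1$) blocks, so $A(\rho)$ is non-degenerate of order at most $n-1$, contradicting minimality of $n$. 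Therefore at least one pair $i\ne j$ has $\lambda_i(\rho)-\lambda_j(\rho)=\mathcal O(\rho^n)$.

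The routine points to be checked are: that the intermediate Sylvester equations remain solvable once the partition collapses (immediate, since all the relevant $\lambda_{0,i}$, or the higher-level eigenvalues on each block, become distinct); that the uniform bounds survive—here one uses that the eigenvalue gaps of $\sum_{j\le n}\rho^j\Lambda_j$ are of exact order $\rho^n$, so the solution operators of the Sylvester equations blow up only polynomially in $\rho^{-1}$, and this is absorbed by the extra powers of $\rho$ gained at each step exactly as in the standard scheme; and that "coincide modulo $\mathcal O(\rho^n)$" is correctly translated into a statement about the coefficients $\Lambda_0,\dots,\Lambda_{n-1}$. I expect the main obstacle, such as it is, to be the careful bookkeeping showing that once the discrete partition is reached the higher-order recursion genuinely reduces to the standard scheme of Theorem~\ref{thm21} with only a harmless $\rho^n$-dependent deterioration of constants; the contradiction argument for the second claim is then essentially a matter of unwinding the definition of non-degeneracy of order $n$ and the minimality clause.
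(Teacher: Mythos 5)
The paper leaves this theorem without its own proof; it is presented as a direct consequence of the preceding Proposition and the observation that non-degeneracy of order $n$ forces {\bf (A$_{n'}$)} for all $n'$. Your reconstruction captures that intended argument correctly: the Proposition supplies $M_n(\rho)$, after step $n$ the partition $\Pi_n$ collapses to singletons so every subsequent $\tilde\Lambda_k$, $k>n$, has only $1\times1$ blocks and is trivially diagonable, the multi-step recursion therefore runs to every order, and the usual patching gives $M(\rho)$, $\Lambda(\rho)$; the second claim is indeed exactly the minimality clause unwound.

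Two minor points are worth tidying. First, the indexing slip: $\tilde\Lambda_{n-1}=A^{[n-2,n-2]}_{n-1}$ is $\Pi_{n-2}$-block-diagonal, not $\Pi_{n-1}$-block-diagonal. The correct statement is that discreteness of $\Pi_{n-1}$ forces each $\Pi_{n-2}$-block of $\tilde\Lambda_{n-1}$ to have distinct eigenvalues, which is precisely ``consists of non-degenerate blocks,'' so the contradiction with minimality goes through as you say. Second, your worry about Sylvester solution operators ``blowing up polynomially in $\rho^{-1}$'' is a non-issue in the paper's formulation: each Sylvester equation $[\Lambda_\ell,K^{[k]}_\ell]+A^{[k,\ell-1]}_{k+1}-A^{[k,\ell]}_{k+1}=0$ is solved for the constant matrix $\Lambda_\ell$ and yields a $\rho$-independent $K^{[k]}_\ell$; the loss you describe is already priced in through the factor $I+\rho^{k+1-\ell}K^{[k]}_\ell$, so uniform boundedness of $M_N(\rho)$ on a shrinking interval $I_N$ is automatic, exactly as in Theorem~\ref{thm21}. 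Relatedly, what takes over past level $n$ is the multi-step scheme with a frozen (discrete) partition and $n+1$ non-trivial sub-steps per level, not literally the one-step standard scheme; the conclusion is the same but the word ``verbatim'' overstates it.
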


\begin{rem}
The statements of Propositions~\ref{prop:22} and~\ref{prop:23} transfer with the respective change in notation, so knowing an approximation of the diagonaliser $M(\rho)$ allows us to estimate eigenvalues and eigenprojections of the matrix-valued function $A(\rho)$.
\end{rem}

\subsubsection{An example}\label{sec2.2a} We want to provide at least one detailed example to make the algorithm more comprehensible. For this we consider the matrix-valued function
\begin{equation}
 A(\rho) = A_0 + \rho A_1 + \rho^2 A_2
\end{equation}
with coefficient matrices
\begin{align}
  A_0 = \frac12\begin{pmatrix} \alpha&\alpha&0\\\alpha&\alpha&0\\0&0&0 \end{pmatrix}, \qquad
  A_1 = \begin{pmatrix}\beta & 0 & \gamma \\0 & -\beta& \gamma\\
\delta/2 &\delta/2 & 0 \end{pmatrix}, \qquad
  A_2 = \begin{pmatrix} 0&0&0\\0&0&0\\0&0& \kappa\end{pmatrix}.
\end{align}
They will reappear in Section~\ref{sec:3.4} within the treatment of a model of thermo-elasticity. 

{\sl Step 0.} The matrix $A_0$ is degenerate with eigenvalues are $2\alpha$ and $0$ twice. A diagonaliser of $A_0$ is given by
\begin{equation}
   M_0 = \begin{pmatrix}1 & 1 & 0 \\ 1 & -1 & 0 \\ 0 & 0 &1  \end{pmatrix},
   \qquad M_0^{-1} =  \frac12  \begin{pmatrix}1 & 1 & 0 \\ 1 & -1 & 0 \\ 0 & 0 &2  \end{pmatrix}.
\end{equation}
Hence 
\begin{equation}
   \tilde A^{[0]}(\rho) = \begin{pmatrix} \alpha & 0 &0 \\0&0& 0 \\ 0& 0& 0\end{pmatrix}
    + \rho \begin{pmatrix} 0& \beta & \gamma \\ \beta & 0 & 0 \\ \delta & 0 & 0 \end{pmatrix} + \rho^2 \begin{pmatrix} 0&0&0\\0&0&0\\0&0& \kappa\end{pmatrix}. 
\end{equation}

{\sl Step 1.}  For $\alpha\ne0$ we have $\Pi_0=(1,2)$ and the matrices $\tilde\Lambda_1$ and $K_0^{[0]}$ are given by
\begin{equation}
  \tilde\Lambda_1 = 0,\qquad
   K_0^{[0]} =  \begin{pmatrix} 0& -\frac\beta{\alpha} & -\frac \gamma{\alpha} \\ \frac\beta{\alpha} & 0 & 0 \\ \frac\delta{\alpha} & 0 & 0 \end{pmatrix}.
\end{equation}
Transforming with $I+\rho K_0^{[0]}$ (and noting that $\tilde\Lambda_1$ is already of diagonal form) yields
\begin{align}
    \tilde A^{[1]} (\rho) &=  (I-\rho K_0^{[0]} + \rho^2 (K_0^{[0]})^2) 
    \tilde A^{[0]}(\rho) (I+\rho K_0^{[0]}) + \mathcal O(\rho^3) \notag\\
    &=\Lambda_0 %+ \rho([\Lambda_0,K_0^{[0]}]+\tilde A_1^{[0]}) 
    + \rho^2 (\tilde A^{[0]}_2 + \tilde A^{[0]}_1 K_0^{[0]}% - K_0^{[0]}\Lambda_0 K_0^{[0]} - K_0^{[0]} \tilde A^{[0]}_1 + (K_0^{[0]})^2\Lambda_0 
    ) + \mathcal O(\rho^3) \notag\\
    &= \begin{pmatrix} \alpha & 0 &0 \\0&0& 0 \\ 0& 0& 0\end{pmatrix}
    + \rho^2 \begin{pmatrix}  \frac{\beta^2+\gamma\delta}{\alpha} & 0 &0 \\
    0 & -\frac{\beta^2}{\alpha} & -\frac{\beta\gamma}{\alpha} \\ 0&-\frac{\beta\delta}{\alpha} & \kappa-\frac{\gamma\delta}{\alpha} \end{pmatrix} + \mathcal O(\rho^3).
\end{align}
 
{\sl Step 2.} Because $\Lambda_1=0$ we get $\Pi_1=\Pi_0$ and the partition can not be refined. So the best we can do is to $(1,2)$-block-diagonalise the matrix $\tilde A^{[1]} (\rho)$ modulo $\mathcal O(\rho^3)$ in the first two sub-steps. Since it is already (1,2)-block-diagonal, we can skip these and proceed directly to the final one. The matrix $\tilde \Lambda_2= A^{[1,1]}_2=\tilde A^{[1]}_2$ 
is diagonable as soon as its lower right block is. So we calculate its eigenvalues. They are given by
\begin{subequations}
\begin{align}
\lambda_1^{[2]}& =  \frac{\beta^2+\gamma\delta}{\alpha}, \\
 \lambda_{2/3}^{[2]}&=\frac12 \left( \kappa - \frac{\beta^2 +\gamma\delta}{\alpha}\right) \pm 
   \sqrt{\frac14\left( \kappa - \frac{\beta^2 +\gamma\delta}{\alpha}\right)^2 + \frac{\beta^2\kappa}{\alpha}},
\end{align}
\end{subequations}
and the latter two are distinct provided that $(\beta^2+(\gamma\delta+\alpha\kappa))^2-4\alpha\kappa\gamma\delta\ne0$.
We assume this; then assumption {\bf (A$_2$)} is satisfied and the matrix family $A(\rho)$ is non-degenerate of order $2$ in $\rho=0$. 

It follows that eigenvalues and eigenprojections have full asymptotic expansions as $\rho\to0$, and the main terms can be read off from the above matrices, i.e., 
\begin{align}
   \lambda_1(\rho) = \alpha +\rho^2 \lambda_1^{[2]}+ \mathcal O(\rho^3),\qquad
   \lambda_{2/3}(\rho) = \rho^2 \lambda_{2/3}^{[2]} + \mathcal O(\rho^3).
\end{align}

\subsection{On the optimality of the conditions}\label{sec2.3}
We call a matrix family $A(\rho)$ uniformly diagonable for all $\rho$ if there exists a family of invertible matrices $T(\rho)$, continuous in $\rho$, such that $T^{-1}(\rho)A(\rho)T(\rho)$ is diagonal and the matrices satisfy a uniform bound $\sup_\rho\|T(\rho)\|<\infty$. 

\begin{lem}
Assume that the matrix family $A(\rho)$ is uniformly diagonable for all $\rho<\epsilon$ up to and including $\rho=0$. 
Then the assumptions  {\bf (A$_{n}$)} are satisfied for all $n$.
\end{lem}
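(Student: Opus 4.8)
The plan is to run the diagonalisation scheme alongside the hypothetical uniform diagonaliser $T(\rho)$ and show, step by step in $k$, that at each level the matrix $\tilde\Lambda_k$ that appears in the scheme is similar to a block of a genuinely diagonal matrix, and hence is itself diagonable. I would argue by induction on $k=0,1,\ldots,n$. For $k=0$ the matrix $\tilde\Lambda_0 = A_0 = A(0)$ is diagonable because, by continuity, $T(0)$ is invertible (its norm is bounded and $\det$ is continuous, so $\det T(0)\ne 0$ unless $\det T(\rho)\to 0$, which together with $\|T(\rho)\|$ bounded would force $\|T^{-1}(\rho)\|\to\infty$; but uniform diagonability at $\rho=0$ is assumed to include $\rho=0$ itself, so $T(0)^{-1}A_0T(0)$ is diagonal). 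This starts the induction.

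For the inductive step, assume {\bf (A$_{k-1}$)} holds, so the scheme produces $M_{k-1}(\rho)$, uniformly bounded with uniformly bounded inverse, such that $M_{k-1}^{-1}(\rho)A(\rho)M_{k-1}(\rho)$ is $\Pi_{k-1}$-block-diagonal modulo $\mathcal O(\rho^{k+1})$, with the expansion $\Lambda_0+\rho\Lambda_1+\cdots+\rho^k\Lambda_k+\mathcal O(\rho^{k+1})$, and the relevant coefficient is $\tilde\Lambda_k=\Lambda_k$ restricted to the blocks of $\Pi_{k-1}$ on which $\Lambda_0,\ldots,\Lambda_{k-1}$ are all scalar. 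I would then consider the matrix family $S(\rho) = M_{k-1}^{-1}(\rho)T(\rho)$, which conjugates $\tilde A^{[k-1]}(\rho)$ to a diagonal matrix $D(\rho)$. Restricting attention to one $\Pi_{k-1}$-block $\mathcal V$ on which $\Lambda_0,\ldots,\Lambda_{k-1}$ act as scalars $c_0,\ldots,c_{k-1}$: on $\mathcal V$ we have $\tilde A^{[k-1]}(\rho)|_{\mathcal V} = (c_0+\cdots+c_{k-1}\rho^{k-1})I + \rho^k\tilde\Lambda_k|_{\mathcal V} + \mathcal O(\rho^{k+1})$, and the corresponding diagonal entries $d_j(\rho)$ of $D(\rho)$ must satisfy $d_j(\rho) = c_0+\cdots+c_{k-1}\rho^{k-1} + \mathcal O(\rho^k)$. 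The point is that $S(\rho)$ need not preserve $\mathcal V$, but since $\tilde A^{[k-1]}(\rho)$ is only block-diagonal modulo $\mathcal O(\rho^{k+1})$, I would instead use the eigenprojections: the spectral projection of $\tilde A^{[k-1]}(\rho)$ associated with the group of eigenvalues converging to $c_0$ and agreeing to order $\rho^{k-1}$ is a bounded analytic (or asymptotic) family $P(\rho)$ converging to the block projection onto $\mathcal V$; on its range $\tilde A^{[k-1]}(\rho)$ acts with $k$-th order part exactly $\tilde\Lambda_k|_{\mathcal V}$, and diagonability of $\tilde A^{[k-1]}(\rho)$ forces this compressed operator to be diagonable for each small $\rho\ne 0$, with uniformly bounded diagonaliser.

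The main obstacle, which I would isolate as the technical heart of the argument, is passing from ``diagonable with uniformly bounded diagonaliser for each $\rho\ne 0$'' to ``the leading coefficient $\tilde\Lambda_k$ is diagonable''. This is where the uniform bound on $T(\rho)$ (equivalently, a bound on the condition number of the diagonalising transformation, which is why the macro $\cond$ is in the preamble) is essential: without it one could diagonalise a family whose limit is a Jordan block. Concretely, after compressing to the range of $P(\rho)$ and subtracting the scalar part, one is left with a family $R(\rho) = \tilde\Lambda_k|_{\mathcal V} + \mathcal O(\rho)$ (after dividing by $\rho^k$) which is diagonable for $\rho\ne 0$ via transformations with uniformly bounded condition number; I would then invoke the standard fact that the set of diagonable matrices with a fixed bound $C$ on the condition number of some diagonaliser is closed (it is the continuous image of a compact set of pairs (diagonal matrix in a bounded region, unitary $\times$ bounded-condition-number factor)), so the limit $\tilde\Lambda_k|_{\mathcal V}$ is diagonable. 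Doing this for every $\Pi_{k-1}$-block gives diagonability of $\tilde\Lambda_k$, completing the induction and the proof. The remaining bookkeeping — that $M_{k-1}(\rho)$ inherits uniform bounds, that the eigenvalue grouping is consistent, that the $\mathcal O(\rho^{k+1})$ error does not spoil the leading-order spectral data — is routine given the construction in Section~\ref{sec2.2} and the resolvent estimates in Proposition~\ref{prop:22}.
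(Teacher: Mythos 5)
Your argument is essentially the paper's: conjugate by the transformation produced by running the scheme through level $k$, isolate the block on which the lower-order diagonal coefficients $\Lambda_0,\ldots,\Lambda_{k-1}$ are scalar, divide by $\rho^k$, and pass to the limit $\rho\to0$ using the uniform bound on and continuity of $T(\rho)$ to produce a diagonaliser of $\tilde\Lambda_k$. You are somewhat more explicit than the paper in acknowledging that $M_{k-1}^{-1}(\rho)T(\rho)$ need not preserve the block exactly (hence the detour through spectral projections) and in invoking the closedness of the set of matrices diagonable with bounded condition number, but these are refinements filling in steps the paper takes for granted rather than a different route; the paper phrases the inductive step as a contradiction and simply evaluates $\tilde T(\rho)\big|_W$ at $\rho=0$.
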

\begin{proof}
If we plug in $\rho=0$ we obtain that $A_0$ is diagonable and thus   {\bf (A$_{0}$)} follows. Assume now that for one particular $n\ge0$ the assumption  {\bf (A$_{n}$)} holds, but  {\bf (A$_{n+1}$)} fails. Then we can apply the first $n$ iterations of our diagonalisation scheme and obtain the existence of a polynomial matrix function
$\tilde M_n(\rho)$, uniformly invertible for small $\rho$, such that
\begin{equation}
   \tilde M_n^{-1}(\rho) A(\rho) \tilde M_n(\rho) = \sum_{j=0}^n \rho^j\Lambda_j + \rho^{n+1} A^{[n,n]}_{n+1} + \mathcal O(\rho^{n+2})
\end{equation}
is valid. Furthermore, $A^{[n,n]}_{n+1}$ is not diagonable, i.e.,  one of its blocks is not diagonable.
Note that on the subspace $W$ corresponding to this block all diagonal matrices $\Lambda_j$ are just multiples of the identity. 

By assumption the left hand side is uniformly diagonable and thus there exists a transformation $\tilde T(\rho) = \tilde M_n^{-1}(\rho) T(\rho)$ diagonalising the left hand side. Restricting consideration to the above mentioned invariant subspace $W$ gives the diagonal matrix
\begin{equation} 
 (\sum_j \rho^j \lambda_i^{[j]}) I + \rho^{n+1} \tilde T^{-1}(\rho)  A^{[n,n]}_{n+1}  \tilde T(\rho)\bigg|_W +  \mathcal O(\rho^{n+2}).
\end{equation}
Since the first addend is diagonal it remains to consider the last two. Dividing by $\rho^{n+1}$ and taking the limit for $\rho\to0$ (which exists due to continuity of $T(\rho)$) gives that $\tilde T(0)\big|_W$ diagonalises the non-diagonable block and thus the desired contradiction.
\end{proof}

\begin{rem} 
From classical perturbation theory, see e.g. the book of Kato, \cite{Kato:1980}, or Knopp, \cite{Knopp:1952}, it is clear that analyticity of $A(\rho)$ implies that the eigenvalues are branches of algebraic functions which have Puiseux series containing fractional exponents (with powers $\rho^{k/p}$, $k=0,1,\ldots$, where $p$ corresponds to the size of irreducible groups of eigenvalues --so called $\lambda$-groups-- that are permuted if the degenerate point $0$ is encircled in the complex plane). If we assume that fractional powers do appear and no eigenvalues coincide identically (i.e.,  if we assume that the matrices are not permanently degenerate), then diagonalisers exist in a neighbourhood of $\rho=0$, but  due to the above lemma we know that they cannot be uniformly bounded / continuous in $\rho$.
\end{rem}

\begin{rem}
In Kato~\cite[Chapter II]{Kato:1980} asymptotic expansions of eigenvalues and eigenprojections corresponding to $\lambda$-groups of eigenvalues are discussed and first terms are given. The approach used there differs from our treatment and is based on Dunford integrals for resolvents
and uses analytic dependence of the matrix family on the involved parameters.
\end{rem}

\section{Applications}\label{sec3}
Power stems from flexibility. We will show by a selection of applications to what extent the schemes introduced in Section~\ref{sec2} can be adapted to deal with real problems in the analysis of (partial) differential equations without loosing their constructiveness. The selection is not complete, but intended to give an impression of the variety of possible uses. 

\subsection{Hyperbolic polynomials} As a first application of the standard scheme of Section~\ref{sec2.1} we want to discuss the behaviour of the roots of hyperbolic polynomials for large spatial frequencies
$\xi$.

\begin{df}
A polynomial $L(\tau,\xi)$ in $1+n$ variables and of degree $m$ is called {\em strictly hyperbolic} if its $m$-homogeneous part $L_m(\tau,\xi)$ seen as polynomial in $\tau$ parameterised by $\xi$ has $m$ distinct real roots $\phi_1(\xi),\dots,\phi_m(\xi)$. 
\end{df}

We are interested in the behaviour of the roots of $L(\tau,\xi)$, the so-called characteristic roots, as $|\xi|\to\infty$. In particular, we want to prove the following asymptotic expansion:

\begin{thm}\label{thm3.1}
The characteristic roots of a strictly hyperbolic polynomial have full asymptotic expansions as
$|\xi|\to\infty$, i.e., 
\begin{equation}\label{eq:3.1}
  \tau_j(\xi)\sim|\xi| \phi_j(\eta) + \tau_j^{(0)}(\eta)+|\xi|^{-1}\tau_j^{(1)}(\eta)+\cdots,\qquad |\xi|\to\infty
\end{equation}
uniformly in $\eta=\xi/|\xi|\in\mathbb S^{n-1}$ with smooth (algebraic) functions $\tau_j^{(k)} : \mathbb S^{n-1}\to\C$.
\end{thm}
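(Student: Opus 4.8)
The plan is to reduce the statement about characteristic roots of $L(\tau,\xi)$ to an eigenvalue problem for a matrix family depending on the large parameter $|\xi|$, to which Theorem~\ref{thm21} applies after rescaling. First I would write $r=|\xi|$ and $\eta=\xi/|\xi|\in\mathbb S^{n-1}$, and pass to the companion matrix $C(\tau,\xi)$ of $L(\cdot,\xi)$ seen as a polynomial in $\tau$; the characteristic roots $\tau_j(\xi)$ are precisely the eigenvalues of $C(\xi)$. The key observation is that $L$ is a polynomial of degree $m$, so after dividing by the leading coefficient the companion matrix has entries that are polynomials in $\xi$ of degree at most $m$; substituting $\xi=r\eta$ and conjugating with the diagonal scaling $\diag(1,r,r^2,\dots,r^{m-1})$ turns the companion matrix of $r^{-m}L(r\,\cdot\,,r\eta)$ into a matrix family $A(\rho,\eta)$ with $\rho=1/r=|\xi|^{-1}$ that has a full asymptotic expansion in entire powers of $\rho$ as $\rho\to0$, uniformly in $\eta$. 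Concretely $A(0,\eta)$ is the companion matrix of the $m$-homogeneous part $L_m(\tau,\eta)$.

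Second I would verify the non-degeneracy hypothesis: by strict hyperbolicity $L_m(\tau,\eta)$ has $m$ distinct real roots $\phi_1(\eta),\dots,\phi_m(\eta)$ for every $\eta\in\mathbb S^{n-1}$, so $A(0,\eta)$ has $m$ distinct eigenvalues and the family $A(\rho,\eta)$ is non-degenerate in $\rho=0$ in the sense of Definition~\ref{df21}. Since $\mathbb S^{n-1}$ is compact and the roots stay uniformly separated, the diagonaliser $M_0(\eta)$ of $A(0,\eta)$ — which for a companion matrix is an explicit Vandermonde matrix in the $\phi_j(\eta)$ — can be chosen smooth and uniformly bounded with uniformly bounded inverse in $\eta$. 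Thus Theorem~\ref{thm21} (applied with $\eta$ as an additional parameter that is merely carried along) produces a uniformly bounded diagonaliser $M(\rho,\eta)$ with full asymptotic expansion and a diagonal matrix $\Lambda(\rho,\eta)=\diag(\mu_j(\rho,\eta))$ whose entries agree to all orders with the eigenvalues of $A(\rho,\eta)$ and therefore have asymptotic expansions $\mu_j(\rho,\eta)\sim\phi_j(\eta)+\rho\,\mu_j^{(1)}(\eta)+\cdots$ with smooth coefficients. Undoing the rescaling, $\tau_j(\xi)=r\,\mu_j(1/r,\eta)$, gives exactly the claimed expansion \eqref{eq:3.1} with $\tau_j^{(k)}(\eta)=\mu_j^{(k+1)}(\eta)$, and smoothness (indeed algebraicity) of the coefficients follows because each $\mu_j^{(k)}$ is obtained from the recursion \eqref{eq:2.6}--\eqref{eq:2.7} by rational operations on the entries of $A(\rho,\eta)$ and the $\phi_i(\eta)$, which are themselves algebraic functions of $\eta$.

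The main obstacle I expect is bookkeeping rather than a genuine difficulty: one must check carefully that the conjugation by $\diag(1,r,\dots,r^{m-1})$ does produce an expansion in \emph{entire} (non-negative) powers of $\rho=1/r$ and that the remainder estimates in Theorem~\ref{thm21} are uniform in the parameter $\eta$. Uniformity in $\eta$ is where compactness of $\mathbb S^{n-1}$ together with the uniform spectral gap of $L_m$ is used; it guarantees that the interval $I_N$ in the proof of Theorem~\ref{thm21} can be chosen independently of $\eta$, equivalently that $|\xi|$ large enough works uniformly on the sphere. A secondary point worth a sentence is to confirm that the $\mu_j$ constructed by the scheme really are (asymptotic to) the characteristic roots and not some spurious quantities: this is immediate from Proposition~\ref{prop:22}, which bounds $\mathrm{dist}(\spec A(\rho,\eta),\spec\Lambda(\rho,\eta))$ by $\mathcal O(\rho^N)$ for every $N$, so each $\mu_j(\rho,\eta)$ lies within $\mathcal O(\rho^\infty)$ of an actual eigenvalue, and by the uniform separation of the leading terms $\phi_j(\eta)$ the matching $j\leftrightarrow\tau_j$ is unambiguous for $|\xi|$ large.
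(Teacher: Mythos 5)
Your proposal is correct and follows essentially the same route as the paper: form the companion matrix of $L(\cdot,\xi)$, rescale by $|\xi|$ (your diagonal conjugation by $\diag(1,r,\dots,r^{m-1})$ is equivalent to the paper's direct formation of $\mathcal L(\xi)=|\xi|\bigl(\mathcal L_0(\eta)+|\xi|^{-1}\mathcal L_1(\eta)+\cdots\bigr)$), observe that strict hyperbolicity gives the non-degeneracy of $\mathcal L_0(\eta)$, and apply Theorem~\ref{thm21} with $\rho=|\xi|^{-1}$, using compactness of $\mathbb S^{n-1}$ for uniformity in $\eta$. The extra closing remark invoking Proposition~\ref{prop:22} to tie the constructed $\mu_j$ to the genuine eigenvalues is a sensible completeness point which the paper leaves implicit.
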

\begin{proof}[Sketch of proof]
The statement follows from the argument used to prove Theorem~\ref{thm21} after rewriting it in terms of matrices and recognising that everything works uniformly in parameters. For this we rewrite the polynomial as 
\begin{equation}
  L(\tau,\xi)=c \sum_{k=0}^m \tau^{m-k} |\xi|^k p_k(\xi),
\end{equation}
with $|\xi|^k p_{k}(\xi)$ polynomial in $\xi$ of degree (at the most) $k$ and $p_0(\xi)=1$. 
Without loss of generalisation we set $c=1$ and form a companion matrix with $L(\tau,\xi)$ as characteristic polynomial,
\begin{equation}\mathcal L(\xi) = |\xi|
  \begin{pmatrix} 
  & 1 & & \\
  & & 1 & \\
  & & & \ddots & \\
  & & & & 1 \\
  -p_m(\xi) & -p_{m-1}(\xi) & \cdots & \cdots & -p_1(\xi)     
  \end{pmatrix}\in \C^{m\times m}.
\end{equation}
This matrix can be written as sum of homogeneous components 
\begin{equation}
\mathcal L(\xi) = |\xi| \mathcal L_{0}(\eta) + \mathcal L_1(\eta) +\cdots+|\xi|^{1-m} \mathcal L_{m}(\eta),
\qquad\eta=\xi/|\xi|\in\mathbb S^{n-1},
\end{equation}
corresponding to the homogeneous parts of the polynomials $|\xi|^kp_k(\xi)$. The assumption of strict hyperbolicity is equivalent to the fact that the matrix $\mathcal L_0(\eta)$ has $m$ distinct real eigenvalues $\phi_1(\eta), \dots,\phi_m(\eta)$ (which are uniformly separated by the compactness of $\mathbb S^{n-1}$) and the standard diagonalisation scheme applied for $\rho=|\xi|^{-1}$ gives representations for all functions involved in \eqref{eq:3.1}.
\end{proof}

We see that the terms $\tau_j^{(0)}(\eta)$ up to $\tau_j^{(k-1)}(\eta)$ depend {\em only} on the
homogeneous components $\mathcal L_0$ up to $\mathcal L_k$. Therefore, the following conclusion
is apparent. 
\begin{cor}\label{cor3.2}
If the polynomial $L(\tau,\xi)-L_m(\tau,\xi)$ is of degree $L<m-1$, then the coefficient functions
$\tau_j^{(0)}(\eta)$ up to $\tau_j^{(m-L-2)}(\eta)$ in the above given asymptotic expansion vanish identically.
\end{cor}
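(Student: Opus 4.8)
The plan is to trace how the vanishing of the intermediate homogeneous parts of $L$ is inherited first by the matrices $\mathcal L_j$ and then by the standard recursion. The first move is to translate the hypothesis into a statement about $\mathcal L(\xi)$. Writing $|\xi|^kp_k(\xi)=\sum_{d=0}^kq_{k,d}(\xi)$ with $q_{k,d}$ homogeneous of degree $d$, the degree-$(m-j)$ homogeneous part of $L$ is $L_{m-j}(\tau,\xi)=\sum_{k\ge j}\tau^{m-k}q_{k,k-j}(\xi)$, while the component of $\mathcal L(\xi)$ scaling like $|\xi|^{1-j}$, namely $\mathcal L_j(\eta)$, carries in its last row precisely the entries $-q_{k,k-j}(\eta)$, and (for $j\ge1$) has no other nonzero entries. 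Hence for $j\ge1$ one has $\mathcal L_j=0$ if and only if $L_{m-j}\equiv0$, whereas $\mathcal L_0$ is the companion matrix of $L_m$ and, by strict hyperbolicity, has $m$ distinct real eigenvalues $\phi_1(\eta),\dots,\phi_m(\eta)$, uniformly separated on $\mathbb S^{n-1}$. Therefore the hypothesis $\deg(L-L_m)=L<m-1$, i.e., $L_r\equiv0$ for $L<r<m$, is equivalent to $\mathcal L_1=\mathcal L_2=\cdots=\mathcal L_{m-L-1}=0$.

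Next I would run the standard scheme. Set $\rho=|\xi|^{-1}$ and $A(\rho)=\rho\,\mathcal L(\xi)=\mathcal L_0+\rho\mathcal L_1+\cdots+\rho^m\mathcal L_m$, so $A_0=\mathcal L_0$ is non-degenerate (uniformly in $\eta$) and $A_1=\cdots=A_{m-L-1}=0$. I claim the recursion of Theorem~\ref{thm21} then yields $M^{(k)}=0$ and $\Lambda^{(k)}=0$ for every $1\le k\le m-L-1$. This is a one-line induction: if $M_{k-1}(\rho)=M_0$ and $\Lambda_{k-1}(\rho)=\Lambda_0$, then $B^{(k)}(\rho)=A(\rho)M_0-M_0\Lambda_0=(A(\rho)-A_0)M_0=\mathcal O(\rho^{m-L})$, so $\tilde B^{(k)}=\lim_{\rho\to0}\rho^{-k}M_0^{-1}B^{(k)}(\rho)=0$ for $k\le m-L-1$, whence $\Lambda^{(k)}=\diag\tilde B^{(k)}=0$ and, by \eqref{eq:2.7}, $M^{(k)}=0$; thus $M_k(\rho)=M_0$ and $\Lambda_k(\rho)=\Lambda_0$ and the induction closes. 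Everything is uniform in $\eta\in\mathbb S^{n-1}$, exactly as in the proof of Theorem~\ref{thm3.1}.

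Finally I would compare expansions. The $j$-th diagonal entry of $\Lambda(\rho)$ is the continuous eigenvalue branch of $A(\rho)=\rho\,\mathcal L(\xi)$ emanating from $\phi_j(\eta)$, hence equals $\rho\,\tau_j(\xi)$; matching $\phi_j(\eta)+\rho\,\tau_j^{(0)}(\eta)+\rho^2\tau_j^{(1)}(\eta)+\cdots$ with $\lambda_{0,j}+\rho\,\Lambda^{(1)}_{jj}+\rho^2\Lambda^{(2)}_{jj}+\cdots$ termwise gives $\tau_j^{(\ell)}(\eta)=\Lambda^{(\ell+1)}_{jj}(\eta)$. By the previous paragraph the right-hand side vanishes whenever $\ell+1\le m-L-1$, that is for $0\le\ell\le m-L-2$, which is exactly the claimed statement.

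I expect the main obstacle to be the degree bookkeeping in the first paragraph: carefully identifying which homogeneous slot of the rescaled companion matrix is driven by which homogeneous part of $L$, keeping track of the normalising factors $|\xi|^kp_k(\xi)$. Once that identification is clean the rest is the short induction above plus the uniformity already contained in the proof of Theorem~\ref{thm3.1}; in particular no new analytic input is needed beyond what was used there.
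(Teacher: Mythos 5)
Your proof is correct and fills in precisely the argument that the paper leaves as a one-line observation preceding the corollary (``the terms $\tau_j^{(0)},\dots,\tau_j^{(k-1)}$ depend only on $\mathcal L_0,\dots,\mathcal L_k$''): you verify the degree bookkeeping showing $L_{m-j}\equiv0$ for $1\le j\le m-L-1$ forces $\mathcal L_j=0$, and then run the standard recursion to see $\Lambda^{(1)}=\cdots=\Lambda^{(m-L-1)}=0$, from which the vanishing of $\tau_j^{(0)},\dots,\tau_j^{(m-L-2)}$ follows by the identification $\tau_j^{(\ell)}=\Lambda^{(\ell+1)}_{jj}$. This is the same approach as the paper, spelled out in full.
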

The statements of Theorem~\ref{thm3.1} and Corollary~\ref{cor3.2} are of particular interest in the situation of \cite{Ruzhansky:2007}, where dispersive estimates for higher-order hyperbolic equations with constant coefficients are discussed. 

\subsection{Asymptotic integration of systems of differential equations} Assume we are given a linear system of differential equations
\begin{equation}\label{eq:3.5}
  \dot v(t) = \frac{\mathrm  dv}{\mathrm d t} = A(t) v,\qquad v(0)=v_0\in \C^m,
\end{equation}
with a time-dependent coefficient matrix $A(t)\in C^\infty(\R;\C^{m\times m})$ having an asymptotic expansion 
\begin{equation}
   A(t) = A_0 + t^{-1} A_1 + t^{-2} A_2 + \cdots ,\qquad t\to\infty,
\end{equation} 
with non-degenerate $A_0$. We assume further that derivatives of $A(t)$ also have asymptotic expansions (which in consequence implies that we can differentiate the above expansion term by term).

\begin{thm}\label{thm:3.2} Assume $A_0$ is non-degenerate.
There exists a uniformly bounded and invertible matrix function $M(t)$ and a diagonal matrix function $\Lambda(t)$, both having full asymptotic expansions as $t\to\infty$,  such  that the operator identity 
\begin{equation}\label{eq:3.7}
    \big( \frac{\mathrm  d}{\mathrm d t} - A(t)\big) M(t) = M(t) \big(\frac{\mathrm  d}{\mathrm d t} -\Lambda(t)\big)\quad \mod\bigcap_N\mathcal O(t^{-N})
\end{equation}
holds modulo matrices decaying faster than all polynomials. 
\end{thm}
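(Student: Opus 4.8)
The plan is to mimic the recursive construction used to prove Theorem~\ref{thm21}, but with the algebraic Sylvester equations replaced by differential Sylvester equations, exploiting the fact that the extra term produced when the operator $\frac{\mathrm d}{\mathrm dt}$ hits the transformation $M(t)$ is of lower order in $t^{-1}$ and hence gets absorbed into the next step of the recursion. Writing $D=\frac{\mathrm d}{\mathrm dt}$, we seek $M(t)\sim M_0(I+t^{-1}M^{(1)}+t^{-2}M^{(2)}+\cdots)$ and $\Lambda(t)\sim\Lambda_0+t^{-1}\Lambda^{(1)}+\cdots$ so that
\begin{equation}
 (D-A(t))M(t)-M(t)(D-\Lambda(t))=\mathcal O(t^{-N}),\qquad\forall N.
\end{equation}
Expanding the left-hand side, the leading term forces $A_0M_0=M_0\Lambda_0$, so as before $M_0$ is a diagonaliser of $A_0$ and $\Lambda_0=\diag(\lambda_{0,1},\dots,\lambda_{0,m})$. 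After conjugating by $M_0$ we may assume $A_0=\Lambda_0$ is already diagonal.

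Next I would set up the induction exactly as in the proof of Theorem~\ref{thm21}: suppose $M_{k-1}(t)$ and $\Lambda_{k-1}(t)$ (polynomials in $t^{-1}$ of degree $k-1$) have been found with
\begin{equation}\label{eq:plan-Bk}
 B^{(k)}(t):=(D-A(t))M_{k-1}(t)-M_{k-1}(t)(D-\Lambda_{k-1}(t))=\mathcal O(t^{-k}),
\end{equation}
and let $\tilde B^{(k)}=\lim_{t\to\infty}t^{k}M_0^{-1}B^{(k)}(t)$. The key observation is that adding the correction $t^{-k}M_0M^{(k)}$ to the diagonaliser changes $B^{(k)}$ by
\begin{equation}
 t^{-k}\big(M_0[\Lambda_0,M^{(k)}]+\text{(terms from }D\text{ acting on }t^{-k}M^{(k)})\big)+\mathcal O(t^{-k-1}),
\end{equation}
and the term where $D$ hits the scalar factor $t^{-k}$ produces $-k\,t^{-k-1}M_0M^{(k)}$, which is $\mathcal O(t^{-k-1})$ and therefore does not interfere at order $t^{-k}$. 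Hence at order $t^{-k}$ the equation to solve is the \emph{same} algebraic Sylvester equation as in \eqref{eq:2.6}: set $\Lambda^{(k)}=\diag\tilde B^{(k)}$ and take $M^{(k)}$ with vanishing diagonal solving $[\Lambda_0,M^{(k)}]+\tilde B^{(k)}-\diag\tilde B^{(k)}=0$, which is solvable by \eqref{eq:2.7} since $A_0$ is non-degenerate. This makes $B^{(k+1)}=\mathcal O(t^{-k-1})$ and the recursion proceeds. Invertibility and uniform boundedness of $M_N(t)$ for large $t$ follow as before, since $M_N(t)=M_0(I+\mathcal O(t^{-1}))$; one then passes from the finite-order approximants $M_N,\Lambda_N$ to genuine asymptotic functions $M,\Lambda$ by the standard Borel-type summation argument of asymptotic analysis, using that $M_{N+1}-M_N=\mathcal O(t^{-N-1})$.

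The one genuinely new point compared with Theorem~\ref{thm21}—and the step I expect to require the most care—is the bookkeeping of the differentiated terms: one must check that every time $D$ falls on a coefficient matrix $M^{(j)}$ (which is constant) it gives zero, and every time it falls on a power $t^{-j}$ it lowers the order, so that the differential operator never contributes at the ``critical'' order in any step. This is what guarantees that the hierarchy of equations stays algebraic and that the assumption of non-degeneracy of $A_0$ alone suffices. A secondary technical point is that the asymptotic expansions of the derivatives of $A(t)$ are needed precisely to ensure that $B^{(k)}(t)$ in \eqref{eq:plan-Bk} genuinely has a full asymptotic expansion (so that the limits defining $\tilde B^{(k)}$ exist); this is where the hypothesis on differentiability of the expansion of $A$ enters. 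Everything else is a verbatim transcription of the standard scheme, now read as an identity of first-order differential operators rather than of matrices.
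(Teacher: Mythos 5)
Your approach is the same as the paper's: you correctly recognise that the recursive scheme from Theorem~\ref{thm21} transfers once the identity is read as one of first-order differential operators, and that the extra contribution from $\frac{\mathrm d}{\mathrm dt}$ hitting $t^{-k}$ is $-kt^{-k-1}M_0M^{(k)}=\mathcal O(t^{-k-1})$, which does not disturb the critical order. You also correctly isolate the role of the hypothesis on asymptotic expansions of the derivatives of $A$.

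There is, however, a sign slip that makes the recursion fail as you have written it. The quantity being made small is
\begin{equation*}
B^{(k)}(t)=\bigl(\tfrac{\mathrm d}{\mathrm dt}-A(t)\bigr)M_{k-1}(t)-M_{k-1}(t)\bigl(\tfrac{\mathrm d}{\mathrm dt}-\Lambda_{k-1}(t)\bigr)=\dot M_{k-1}(t)-A(t)M_{k-1}(t)+M_{k-1}(t)\Lambda_{k-1}(t),
\end{equation*}
in which $A$ and $\Lambda$ enter with the \emph{opposite} sign to the expression $AM_{k-1}-M_{k-1}\Lambda_{k-1}$ controlled in Theorem~\ref{thm21}. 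Working the expansion through at order $t^{-k}$ one finds
\begin{equation*}
B^{(k+1)}(t)=t^{-k}M_0\bigl(\tilde B^{(k)}-[\Lambda_0,M^{(k)}]+\Lambda^{(k)}\bigr)+\mathcal O(t^{-k-1}),
\end{equation*}
so the correct choices are $\Lambda^{(k)}=-\diag\tilde B^{(k)}$ and $[\Lambda_0,M^{(k)}]=\tilde B^{(k)}+\Lambda^{(k)}$, both opposite to the formulas you transcribed verbatim from~\eqref{eq:2.6}. With your choices the leading-order coefficient evaluates to $2\tilde B^{(k)}$ rather than $0$, and $B^{(k+1)}$ is still only $\mathcal O(t^{-k})$. (Your displayed formula for the change in $B^{(k)}$ has the commutator with the wrong sign too: it should read $-t^{-k}M_0[\Lambda_0,M^{(k)}]$.) The paper flags precisely this sign convention in a footnote to its proof. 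Once the sign is corrected, the argument goes through exactly as you describe, including the passage to the full asymptotic sums and the invertibility of $M_N(t)$ for large~$t$.
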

\begin{proof}[Sketch of proof]
We apply a variant of the standard diagonalisation scheme for diagonalising this system. Thus, we  construct recursively matrices 
\begin{subequations}
\begin{align}
M_k(t)&= M_0(I+t^{-1}M^{(1)}+t^{-2}M^{(2)}+\cdots+t^{-k}M^{(k)}),\\
\Lambda_k(t)&= \Lambda_0+t^{-1} \Lambda^{(1)} +t^{-2}\Lambda^{(2)} + \cdots+t^{-k}\Lambda^{(k)},
\end{align}
\end{subequations}
and follow the two steps from the proof of Theorem~\ref{thm21}. Again $M_0$ is the diagonaliser of $A_0$ and {\sl Step 1} transfers directly. In {\sl Step 2} we include the differential operator in 
\eqref{eq:2.5} 
\begin{equation}%\label{eq:2.5}
   B^{(k)}(t) =  \big(\frac{\mathrm  d}{\mathrm d t}-A(t)\big) M_{k-1}(t)-M_{k-1}(t)\big(\frac{\mathrm  d}{\mathrm d t}- \Lambda_{k-1}(t)\big)  = \mathcal O(t^{-k})
\end{equation}
and, thus, view it also as an operator identity. Analogously\footnote{Since we are concerned with $-A(t)$ in formula \eqref{eq:3.7} we include a minus sign in the definition of $\Lambda^{(k)}$.}, we define the matrices $\tilde B^{(k)}=\lim_{t\to\infty} t^k M_0^{-1} B^{(k)}(t)$, $\Lambda^{(k)}=-\diag \tilde B^{(k)}$ and $M^{(k)}$ as solution to the commutator equation $[\Lambda_0,M^{(k)}]=\tilde B^{(k)}+\Lambda^{(k)}$.  Since time-derivatives are one order better, the scheme works through,
\begin{align}
  B^{(k+1)}(t) =& B^{(k)}(t) \notag\\ 
  &- t^{-k} \big(A(t) M_0 M^{(k)} -M_0 M^{(k)}\Lambda_k(t) -M_{k-1}(t) \Lambda^{(k)}\big) \\
  &- kt^{-k-1} M_0 M^{(k)} \notag
\end{align}
is of order $\mathcal O(t^{-k-1})$, and the recursion provides all matrices involved in the statement together with the necessary bounds.
\end{proof}

\begin{rem}
If the matrix $A_0$ is degenerate, multi-step schemes apply in a similar way. 
\end{rem}

A particular application of Theorem~\ref{thm:3.2} is the derivation of WKB approximations of solutions to hyperbolic systems.

\begin{cor}
Assume in addition that $A_0$ is skew. Then the solutions to \eqref{eq:3.5} satisfy
\begin{equation}
  v(t)  =  M(t) \exp\left(\int_0^t \Lambda(s)\mathrm ds\right) Q(t) M^{-1}(t) v_0,
\end{equation}
with a uniformly bounded matrix $Q(t)$ converging faster than polynomially to an invertible limit $Q_0=Q(\infty)$ as $t\to\infty$.
\end{cor}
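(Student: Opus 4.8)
The plan is to start from the diagonalisation identity \eqref{eq:3.7} and convert the operator conjugation into an exact statement about the fundamental solution, isolating the polynomially-decaying error into the correction factor $Q(t)$. First I would write $M(t)$, $\Lambda(t)$ for the matrix functions provided by Theorem~\ref{thm:3.2}, and let $\mathcal E(t,s)$ denote the propagator of the remainder: by \eqref{eq:3.7} we have
\begin{equation}
  \big(\tfrac{\mathrm d}{\mathrm d t}-A(t)\big) M(t) = M(t)\big(\tfrac{\mathrm d}{\mathrm d t}-\Lambda(t)\big) + R(t),
\end{equation}
with $R(t)\in\bigcap_N\mathcal O(t^{-N})$. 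Substituting the ansatz $v(t)=M(t) w(t)$ into \eqref{eq:3.5} gives $\dot w = \Lambda(t) w + M^{-1}(t) R(t) M(t)\, w + \mathcal O(t^{-N})$-type terms acting on $w$; collecting all the fast-decaying pieces into a single matrix $\widetilde R(t)\in\bigcap_N\mathcal O(t^{-N})$, the reduced system is $\dot w = \big(\Lambda(t)+\widetilde R(t)\big) w$. Then I would factor out the leading oscillation by setting $w(t)=\exp\!\big(\int_0^t\Lambda(s)\,\mathrm ds\big) q(t)$; since $\Lambda(t)$ is diagonal it commutes with its own integral, so $q$ solves
\begin{equation}
  \dot q(t) = \exp\!\Big(-\!\int_0^t\Lambda(s)\,\mathrm ds\Big)\,\widetilde R(t)\,\exp\!\Big(\!\int_0^t\Lambda(s)\,\mathrm ds\Big)\, q(t) =: S(t) q(t).
\end{equation}

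The second step is to control $S(t)$ and hence $q(t)$. Here the hypothesis that $A_0$ is skew is exactly what is needed: skew-symmetry of $A_0$ means its eigenvalues are purely imaginary, so after diagonalisation the leading term $\Lambda_0$ is diagonal with purely imaginary entries, and therefore $\exp(\pm\int_0^t\Lambda(s)\,\mathrm ds)$ is a product of a bounded (unitary, up to the fixed transformation $M_0$) factor coming from $\Lambda_0 t$ and lower-order corrections from $\int t^{-1}\Lambda^{(1)}\,\mathrm ds \sim \Lambda^{(1)}\log t$ and genuinely convergent tails. The potentially dangerous logarithmic factor is only polynomial (indeed sub-polynomial) in $t$, so conjugating $\widetilde R(t)$ — which decays faster than every power of $t^{-1}$ — by these factors still leaves $S(t)\in\bigcap_N\mathcal O(t^{-N})$; in particular $S\in L^1([0,\infty))$. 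By the standard Levinson-type / Gronwall argument the fundamental matrix $Q(t)$ of $\dot q = S(t) q$ with $Q(0)=I$ is uniformly bounded, invertible for all $t$, and converges as $t\to\infty$ to an invertible limit $Q_0 = I + \int_0^\infty S(s) Q(s)\,\mathrm ds$, with $\|Q(t)-Q_0\|\le \int_t^\infty\|S(s)\|\,\|Q(s)\|\,\mathrm ds$ decaying faster than any polynomial because $S$ does. Unwinding the substitutions, $v(t)=M(t)\exp\!\big(\int_0^t\Lambda(s)\,\mathrm ds\big)Q(t) q(0)$, and matching the initial condition $v(0)=v_0$ forces $q(0)=M^{-1}(0) v_0$ (using $Q(0)=I$ and $\int_0^0=0$), which gives precisely the asserted formula.

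The main obstacle, and the point deserving the most care, is the bookkeeping in the first step: verifying that after the change of variables $v=Mw$ all the error terms — the remainder $R(t)$ from \eqref{eq:3.7}, plus the contribution of $\dot M(t)$ that arises because the conjugation is of differential operators rather than matrices — genuinely combine into a single matrix $\widetilde R(t)$ lying in $\bigcap_N\mathcal O(t^{-N})$, rather than merely $\mathcal O(t^{-N})$ for each fixed $N$ with possibly non-uniform constants. This is where one must appeal carefully to the construction in Theorem~\ref{thm:3.2}: the matrices $M_k$, $\Lambda_k$ are built so that the mismatch is $\mathcal O(t^{-k-1})$ with explicitly controlled constants, and $M(t)$ is chosen to coincide with all $M_k$ to the appropriate orders, so the limiting error is indeed faster than polynomial. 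The rest — the skewness giving purely imaginary $\Lambda_0$, the harmlessness of the $\log t$ factor against super-polynomial decay, and the Levinson convergence — is routine once this uniformity is in place.
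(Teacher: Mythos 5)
Your proposal is correct and follows essentially the same route as the paper: conjugate by $M(t)$ to reduce to $\dot w=(\Lambda+\widetilde R)w$, factor out $\exp\bigl(\int_0^t\Lambda\bigr)$, use the skewness of $A_0$ (hence purely imaginary $\Lambda_0$) to get polynomial bounds on $\exp\bigl(\pm\int_0^t\Lambda\bigr)$, and control $Q$ by a Gronwall/Peano--Baker argument on the $L^1$-conjugated remainder. The only cosmetic difference is that the paper makes the invertibility of $Q(\infty)$ explicit via Liouville's formula $\det Q(\infty)=\exp\bigl(\int_0^\infty\operatorname{trace}R\bigr)\ne0$, whereas you fold it into the cited Levinson-type package.
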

\begin{proof}
We denote the remainder in \eqref{eq:3.7} as
\begin{equation}
   R(t) = M^{-1}(t) \dot M(t) - M^{-1}(t)A(t)M(t)+\Lambda(t) \in\bigcap_N \mathcal O(t^{-N}),
\end{equation}
such that $v^{(1)}(t) = M^{-1}(t) v(t)$ satisfies $\dot  v^{(1)}(t) = \big(\Lambda(t) + R(t) \big) v^{(1)}(t)$. The diagonal part of this remaining system can be solved directly by means of the fundamental solution
\begin{equation}
\mathcal E(t) = \exp\left(\int_0^t \Lambda(s)\mathrm ds\right).
\end{equation}
To treat $R(t)$ as perturbation we make an Ansatz for the fundamental solution of the diagonalised system of the form  $\mathcal E(t) Q(t)$. This yields for $Q(t)$ 
\begin{equation}
 \frac{\mathrm  d}{\mathrm d t} Q(t) = \mathcal R(t)Q(t) = \big( \mathcal E^{-1}(t)R(t)\mathcal E(t)\big)Q(t),\qquad Q(0)=I,
\end{equation}
which can be solved in terms of the Peano-Baker series
\begin{equation}
   Q(t)=I+\sum_{k=1}^\infty \int_0^t \mathcal R(t_1)\int_0^{t_1} \mathcal R(t_2) \cdots\int_0^{t_{k-1}}
   \mathcal R(t_k)\mathrm d t_k\cdots \mathrm d t_2\mathrm d t_1.
\end{equation}
If $A_0$ is skew it follows that $\mathcal E(t)$ and $\mathcal E^{-1}(t)$ both satisfy polynomial bounds. Hence, $\mathcal R(t)$ decays fast in $t$, and the statement follows from the estimates
\begin{align}
   \|Q(t)\| &\le \exp\left(\int_0^t \mathcal R(s)\mathrm ds\right),\\
   \|Q(t)-Q(\infty)\| &\le \int_t^\infty \mathcal R(s)\mathrm ds \;\exp\left(\int_0^\infty \mathcal R(s)\mathrm ds\right) \in\bigcap_N \mathcal O(t^{-N}),
\end{align}
in combination with Liouville's theorem, $\det Q(\infty) = \exp( \int_0^\infty \mathrm{trace}\, R(s)\mathrm ds)\ne0$.
\end{proof}

A variant of this approach was used in \cite[Chapter~2]{Yagdjian:1997}.
For a slightly different diagonalisation based method using less regularity of the coefficient see the treatise of Eastham, \cite[Chapter~1]{Eastham:1989}, or the second author's utilisation in \cite{Wirth:2007d}. 

\subsection{Diagonalisation within symbolic hierarchies}\label{sec3.2}
The method of the previous section can be extended to systems involving parameters. Typical applications are hyperbolic partial differential equations with $t$-dependent coefficients, which are treated by means of the partial Fourier transform. In this case one has to be careful by choosing $t$ large in dependence of the parameter and this leads to the introduction of so-called zones. We will not go into details here, but refer the reader to the fundamental treatise of Yagdjian on weakly hyperbolic problems, \cite[Chapter~3]{Yagdjian:1997}, and applications deducing dispersive estimates for wave equations with variable propagation speed of Reissig and co-authors, \cite{Reissig:2000}, \cite{Reissig:2005}. More involved considerations including several zones and different diagonalisation hierarchies turn up for the treatment of lower-order terms, e.g. in \cite{Hirosawa:2003}, \cite{Reissig:2006}, \cite{Wirth:2006}
 and \cite{Wirth:2007} .

A second modification of the standard scheme is to replace the matrices with complex entries by matrix-valued pseudo-differential operators and to apply it within certain symbolic hierarchies. This was developed in \cite{Yagdjian:1997} in order to prove well-posedness of  hyperbolic systems with multiple characteristics, and similarly by Kubo-Reissig \cite{Kubo:2003}, \cite{Kubo:2003a}, and Hirosawa-Reissig \cite{Hirosawa:2004a}, to obtain corresponding results for strictly hyperbolic problems with certain non-Lipschitz coefficients.

We will sketch the approach in a somewhat simplified case. We denote by $\mathcal Z_{hyp}(N)$ and $\mathcal Z_{pd}(N)$  the subsets
\begin{subequations}
\begin{align}
\mathcal Z_{hyp}(N) &= \{ (t,x,\xi)\; :\; t \langle\xi\rangle \ge N \},\\
\mathcal Z_{pd}(N) &= \{ (t,x,\xi)\; :\; t \langle\xi\rangle \le N \}
\end{align}
\end{subequations}
of the extended phase space $(0,T]\times\R^n_x\times\R^n_\xi$, where as usual $\langle\xi\rangle=\sqrt{1+|\xi|^2}$, and consider pseudo-differential operators corresponding to the following symbol classes:
\begin{df}
A symbol $a(t,x,\xi)\in C^\infty((0,T]\times\R^n_x\times\R^n_\xi)$ belongs to $\mathcal S_N\{m_1,m_2\}$ if the symbolic estimates
\begin{subequations}
\begin{align}
   \sup_{(t,x,\xi)\in \mathcal Z_{hyp}(N)} \left| \partial_t^k\partial_\xi^\alpha\partial_x^\beta a(t,x,\xi)\right|
   &\le C_{k,\alpha,\beta} \langle\xi\rangle^{m_1-|\alpha|} \left(\frac1t\right)^{m_2+k}, \\
    \sup_{(t,x,\xi)\in \mathcal Z_{pd}(N)} \left|\partial_\xi^\alpha\partial_x^\beta a(t,x,\xi)\right|
   &\le C'_{\alpha,\beta} \langle\xi\rangle^{m_1+m_2-|\alpha|}
\end{align}
\end{subequations}
hold true for all multi-indices $\alpha,\beta \in\mathbb N_0^n$ and all $k\in\mathbb N_0$.
\end{df}

For a full account of calculus properties of such kinds of symbol classes we refer to \cite{Yagdjian:1997},
we only mention embedding properties and relations to classical symbols here. Note that $\langle\xi\rangle\ge 1$ and $t^{-1}\ge T^{-1}$, and so
\begin{subequations}
\begin{align}\label{eq:3.19a}
\mathcal S_N\{m_1-k,m_2-\ell\}\hookrightarrow \mathcal S_N\{m_1,m_2\},\qquad k,\ell\ge0.
\intertext{Furthermore, the definition of the hyperbolic zone implies}
\mathcal S_N\{m_1-k,m_2+k\}\hookrightarrow \mathcal S_N\{m_1,m_2\},\qquad k \ge 0
\end{align}
\end{subequations} 
The embedding hierarchy \eqref{eq:3.19a} with $\ell=0$ will be denoted as calculus hierarchy. It allows us to transfer the usual symbolic calculus and corresponding asymptotic expansions from the H\"ormander classes 
$S^{m_1}_{1,0} (\R^n_x\times\R^n_\xi)$ via the embedding $\mathcal S_N\{m_1,m_2\}\hookrightarrow C^\infty((0,T],  S^{m_1}_{1,0} (\R^n_x\times\R^n_\xi))$. Thus, the composition of operators corresponds to the usual Leibniz product $\sharp$ of symbols. Symbols from $\mathcal S_N\{m_1,m_2\}$ which are invertible modulo smoothing operators from $\mathcal S_N\{-\infty,m_2\}$ with a parametrix in $\mathcal S_N\{-m_1,-m_2\}$ will be called elliptic, and the parametrix is denoted with an upper $\sharp$. 
The second hierarchy will be used within the diagonalisation scheme, and we denote the corresponding residual class of this hierarchy by $\mathcal H_N\{m\}=\bigcap_{k} \mathcal S_N\{m-k,k\}$. 

Assume for the following that we have a matrix-valued symbol $A(t,x,\xi)\in\mathcal S_N\{1,0\}$, whose eigenvalues satisfy the non-degeneracy assumption
\begin{equation}\label{eq:3.19}
  | \lambda_i(t,x,\xi) -\lambda_j(t,x,\xi) | \ge C \langle\xi\rangle 
\end{equation}
uniformly in $i\ne j$ and $(t,x,\xi)\in \mathcal Z_{hyp}(N)$ for a certain large $N$.

\begin{thm}\label{thm:3.5}
There exists an elliptic matrix valued symbol $M(t,x,\xi)\in\mathcal S_N\{0,0\}$ and a diagonal symbol $\Lambda(t,x,\xi)\in\mathcal S_N\{1,0\}$ such that the operator identity
\begin{equation}
    \big(\frac{\mathrm d}{\mathrm dt} - A(t,x,\xi) \big) \sharp M(t,x,\xi) = M(t,x,\xi) \sharp   \big(\frac{\mathrm d}{\mathrm dt} - \Lambda(t,x,\xi) \big) \quad\mod\mathcal H_N\{1\}
\end{equation}
holds modulo symbols from the residual class $\mathcal H_N\{1\}$. 
\end{thm}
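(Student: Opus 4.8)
The plan is to run the variant of the standard diagonalisation scheme of Theorem~\ref{thm21} and Theorem~\ref{thm:3.2}, but now with scalar entries replaced by pseudo-differential symbols, with the Leibniz product $\sharp$ in place of ordinary matrix multiplication, and with $\mathcal O(\rho^{N})$-type control replaced by the calculus hierarchy: at step $k$ the remainder will lie in $\mathcal S_N\{1-k,k\}$, so that passing $k\to\infty$ lands in the residual class $\mathcal H_N\{1\}=\bigcap_k\mathcal S_N\{1-k,k\}$. First I would diagonalise the principal symbol: the assumption \eqref{eq:3.19} says the eigenvalues $\lambda_i(t,x,\xi)$ of the homogeneous principal part are uniformly separated on $\mathcal Z_{hyp}(N)$, so by the usual continuity/implicit function argument there is a diagonaliser $M_0(t,x,\xi)\in\mathcal S_N\{0,0\}$, elliptic with inverse in $\mathcal S_N\{0,0\}$, and conjugating by the operator $M_0$ (using the calculus, so up to lower-order symbols) brings the system into the form $\tfrac{\mathrm d}{\mathrm dt}-\Lambda_0(t,x,\xi)-(\text{lower order})$ with $\Lambda_0$ diagonal in $\mathcal S_N\{1,0\}$.

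Next comes the recursion. Suppose we have constructed $M_{k-1}\in\mathcal S_N\{0,0\}$ and a diagonal $\Lambda_{k-1}\in\mathcal S_N\{1,0\}$ with
\begin{equation}
   B^{(k)} = \big(\tfrac{\mathrm d}{\mathrm dt}-A\big)\sharp M_{k-1} - M_{k-1}\sharp\big(\tfrac{\mathrm d}{\mathrm dt}-\Lambda_{k-1}\big) \in \mathcal S_N\{1-k,k\}.
\end{equation}
Set $\tilde B^{(k)} = M_0^\sharp\sharp B^{(k)}$ (or, modulo lower order, the honest inverse), let the next diagonal correction be $\Lambda^{(k)} = -\diag \tilde B^{(k)}\in\mathcal S_N\{1-k,k\}$, and let $M^{(k)}$ solve the Sylvester equation $[\Lambda_0,M^{(k)}]=\tilde B^{(k)}+\Lambda^{(k)}$ entrywise; by \eqref{eq:3.19} the off-diagonal divisors $\lambda_i-\lambda_j$ are elliptic of order $1$, so $M^{(k)}\in\mathcal S_N\{-k,k\}$ and the new factor $I+M^{(k)}$ is again elliptic in $\mathcal S_N\{0,0\}$. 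Then one checks, exactly as in \eqref{eq:2.9} and the display in the proof of Theorem~\ref{thm:3.2}, that
\begin{equation}
   B^{(k+1)} = B^{(k)} - \big(A\sharp M_0\sharp M^{(k)} - M_0\sharp M^{(k)}\sharp\Lambda_k - M_{k-1}\sharp\Lambda^{(k)}\big) - \partial_t\big(M_0\sharp M^{(k)}\big)
\end{equation}
lies in $\mathcal S_N\{-k,k+1\}\hookrightarrow\mathcal S_N\{1-(k+1),k+1\}$: the main cancellation is produced by the Sylvester equation, the Leibniz-product error terms are one order lower in $\langle\xi\rangle$, and the $\partial_t$ falling on $M^{(k)}\in\mathcal S_N\{-k,k\}$ costs one extra power of $1/t$, i.e.\ lands in $\mathcal S_N\{-k,k+1\}$. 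Finally one asymptotically sums: choose $M(t,x,\xi)$ with $M\sim M_0\sharp(I+M^{(1)})\sharp(I+M^{(2)})\sharp\cdots$ in the calculus hierarchy and $\Lambda\sim\Lambda_0+\Lambda^{(1)}+\cdots$, so that $M\in\mathcal S_N\{0,0\}$ is elliptic, $\Lambda\in\mathcal S_N\{1,0\}$ is diagonal, and the operator identity holds modulo $\bigcap_k\mathcal S_N\{1-k,k\}=\mathcal H_N\{1\}$.

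The main obstacle is purely symbolic bookkeeping rather than a new idea: one must verify that every operation stays inside the calculus hierarchy with the advertised bi-degrees, in particular that (i) conjugation and composition errors in the $\sharp$-calculus drop one order in $\langle\xi\rangle$ without spoiling the $1/t$-count, (ii) the solution of the Sylvester equation inherits the right class from ellipticity of $\lambda_i-\lambda_j$, uniformly in $i\ne j$ on $\mathcal Z_{hyp}(N)$, and (iii) the extra $\partial_t$ in the WKB-type correction is harmless because a $t$-derivative improves the $m_2$-index by one. None of these steps is hard individually, but keeping track of the two-parameter filtration $\mathcal S_N\{m_1,m_2\}$ and the asymptotic summation in it is where the care is needed; the full details are carried out in \cite{Yagdjian:1997}.
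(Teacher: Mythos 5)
Your proposal follows essentially the same route as the paper: diagonalise the symbol with an elliptic $M_0\in\mathcal S_N\{0,0\}$, then run the recursion with the Sylvester equation in the two-parameter hierarchy, using that a $t$-derivative improves the $m_2$-index and that the $\sharp$-product errors drop one order in $\langle\xi\rangle$, landing $B^{(k+1)}\in\mathcal S_N\{-k,k+1\}$ and finishing by asymptotic summation into $\mathcal H_N\{1\}$. Two small bookkeeping nits against the paper's version: the non-degeneracy \eqref{eq:3.19} is stated for the full symbol $A$ rather than a homogeneous principal part, and the division by $\lambda_i-\lambda_j$ in the Sylvester solution must be localised by a cut-off $\chi_N$ to $\mathcal Z_{hyp}(N)$ (also, the sign of the $\partial_t(M_0\sharp M^{(k)})$ contribution should be $+$), but none of this changes the argument.
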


\begin{proof}[Sketch of proof]
The proof transfers almost word by word from Theorems~\ref{thm21} and \ref{thm:3.2}, except that we now replace multiplications by the Leibniz product~$\sharp$ and solve the Sylvester equation \eqref{eq:2.6} only modulo symbols of lower order (such that \eqref{eq:2.7} still remains true).

For completeness we give the main steps and the corresponding symbol estimates. We construct
\begin{subequations}\label{eq:3.22}
\begin{align}
M_k(t,x,\xi)&= M_0(t,x,\xi)\sharp\left(I+\sum_{j=1}^k M^{(j)}(t,x,\xi)\right)\\
\intertext{and}
\Lambda_k(t,x,\xi)&= \Lambda_{0}(t,x,\xi) +\sum_{j=1}^k \Lambda^{(j)}(t,x,\xi)
\end{align}
\end{subequations}
with matrix-valued symbols satisfying $M_0\in \mathcal S_N\{0,0\}$ elliptic, $\Lambda_0\in\mathcal S_N\{1,0\}$, $M^{(j)}\in\mathcal S_N\{-j,j\}$ and $\Lambda^{(j)}\in\mathcal S_N\{1-j,j\}$. 

In {\sl Step 1} we choose $M_0(t,x,\xi)$ to be a diagonaliser of the full symbol $A(t,x,\xi)$ within 
$\mathcal Z_{hyp}(N)$, which is uniformly bounded and invertible. This can be done by \eqref{eq:3.19}. Furthermore, we define $\Lambda_0=\diag M_0^\sharp\sharp A\sharp M_0$.   In {\sl Step 2} we proceed recursively and, in analogy to \eqref{eq:2.5}, define for $k=1,2,\ldots$
\begin{equation}
   B^{(k)}(t,x,\xi) =  \big(\frac{\mathrm  d}{\mathrm d t}-A(t,x,\xi)\big)\sharp M_{k-1}(t,x,\xi)-M_{k-1}(t,x,\xi)\sharp\big(\frac{\mathrm  d}{\mathrm d t}- \Lambda_{k-1}(t,x,\xi)\big) 
\end{equation}
For $k=1$ we know from Step 1 that $B^{(1)}\in\mathcal S\{0,1\}$. For our recursive argument we assume $\tilde B^{(k)}=M_0^{\sharp}\sharp B^{(k)}\in \mathcal S_N\{1-k,k\}$ and set $\Lambda^{(k)}=-\diag  \tilde B^{(k)}$. Defining $M^{(k)}$ by \eqref{eq:2.7}
\begin{equation}
   \big(M^{(k)}(t,x,\xi)\big)_{ij} = \begin{cases} \frac{\big(\tilde B^{(k)}(t,x,\xi)\big)_{ij}}{\lambda_{i}(t,x,\xi)-\lambda_{j}(t,x,\xi)}\chi_N(t,x,\xi), \qquad & i\ne j,\\0, \qquad & i=j, \end{cases}
\end{equation}
with $\chi_N(t,x,\xi)$ a smooth cut-off function localising to $\mathcal Z_{hyp}(N)$, implies
\begin{align}
   [\Lambda_0,M^{(k)}] &= \Lambda_0\sharp M^{(k)}-M^{(k)}\sharp \Lambda_0\notag\\& = \tilde B^{(k)}+\Lambda^{(k)} \mod \mathcal S_N\{-k,k\},
\end{align}
so
\begin{align}
   B^{(k+1)} &= B^{(k)}- A\sharp M_0\sharp M^{(k)}+M_0\sharp M^{(k)}\sharp\Lambda_0+M_0\sharp\Lambda^{(k)}\notag\\
   &\in M_0\sharp \left( \tilde B^{(k)} -[\Lambda_0,M^{(k)}] + \Lambda^{(k)} \right) + \mathcal S_N\{-k,k+1\}\\
   &\in \mathcal S_N\{-k,k+1\}\notag.
\end{align}
We can proceed inductively to construct all $M_k$, $\Lambda_k$ and the statement itself follows by forming asymptotic sums instead of the finite ones in \eqref{eq:3.22}.
\end{proof}

This statement may be used to deduce well-posedness of degenerate Cauchy problems. Assume that the matrix $A(t,x,\xi)\in\mathcal S_N\{1,0\}$ is hyperbolic in the sense that $|\mathrm{Re}\,\lambda_j(t,x,\xi)|\le C$ uniform in $(t,x,\xi)\in(0,T]\times\R_x^n\times\R_\xi^n$ in combination with \eqref{eq:3.19}, but degenerates as $t$ approaches $0$,
\begin{equation}
 \|A(t,x,\xi)\|\lesssim 1,\qquad  \| \partial_t A(t,x,\xi) \| \approx \frac1t.
\end{equation} 
We may ask whether we can pose the Cauchy problem 
\begin{equation}
  \frac{\mathrm d}{\mathrm dt} v(t,x) = A(t,x,\mathrm D) v(t,x),\qquad v(0,\cdot)\in L^2(\R^n;\C^m).
\end{equation}
The standard energy argument does not work, because this would mean that we have to differentiate the full symbol $A(t,x,\xi)$. But the above diagonalisation argument simplifies the problems substantially. 
If we consider $v^{(1)}(t,x) = M^\sharp(t,x,\mathrm D) v(t,x)$ and apply Theorem~\ref{thm:3.5}, we obtain\begin{equation}
  \frac{\mathrm d}{\mathrm dt} v^{(1)}(t,x) = \big(\Lambda(t,x,\mathrm D) + R(t,x,\mathrm D) \big)v^{(1)}(t,x),
\end{equation}
with a remainder $R(t,x,\xi)\in\mathcal H_N\{1\}$. The diagonal part of this system can be solved in terms of a diagonal matrix of (elliptic) Fourier integral operators of order zero. This follows from the hyperbolicity assumption. Furthermore, it can be shown that $\mathcal H_N\{m\}$ is invariant under conjugations with such Fourier integral operators (see \cite[Prop.~3.8.12]{Yagdjian:1997}), and for an arbitrary $P\in \mathcal H_N\{1\}$ the operator $\frac{\mathrm d}{\mathrm dt} -P(t,x,\mathrm D)$ has a pseudo-differential fundamental solution in $C([0,T],\Psi^0(\R^n))$, $0$ included (see \cite[Prop.~3.9.1]{Yagdjian:1997}). Well-posedness follows.

The argument can be found, in full detail and including fast oscillations within the coefficient (i.e.,  a further $\log$-term in the estimates), in \cite{Kubo:2003}.

\subsection{Pseudo-differential decoupling of hyperbolic-parabolic coupled systems}  The two-step diagonalisation scheme appeared first in Wang \cite{Wang:2003}, \cite{Wang:2003a} in applications decoupling hyperbolic-parabolic coupled systems, and was used to study the propagation of singularities, \cite{Reissig:2005a}. In  \cite{Wirth:2007b},  \cite{Wirth:2007c} it was an essential tool for understanding dispersive estimates for anistropic thermo-elasticity in two space dimensions.

Again we will give a somewhat simplified example, which shows the main arguments involved and the importance of the two-step procedure. For this we choose a thermo-elastic system,
\begin{subequations}
\begin{align}
   &U_{tt} - (2\mu+\lambda) \nabla\nabla\cdot U + \mu \nabla\times(\nabla\times U) + \gamma_1 \nabla\theta = 0,\\
   &\theta_t - \beta^2\Delta\theta +\gamma_2 \nabla\cdot U_t = 0,
\end{align}
\end{subequations}
with smooth scalar functions $\lambda,\mu,\beta,\gamma_1,\gamma_2\in C^\infty(\R_t\times\Omega)$,
$\Omega\subset\R^3$, subject to the restrictions $\mu>0$, $2\mu+\lambda>0$, $\beta>0$ and $\gamma_1\gamma_2>0$. If one employs a Helmholtz decomposition of the displacement vector $U$,
\begin{equation}
   U=U^{p}+U^{s},\qquad \nabla\times U^{p}=0,\quad \nabla\cdot U^{s}=0,
\end{equation}
either globally on $\R^3$ or locally on any small open subset $\Omega'$ of the underlying domain $\Omega$ with trivial second cohomology class, the system simplifies. The solenoidal part $U^{s}$ satisfies the (well-studied) wave equation $U^s_{tt}-\mu\Delta U^s=0$, while the potential part $U^p=\nabla V$ satisfies 
\begin{subequations}
\begin{align}
& \nabla  V_{tt} -(2\mu+\lambda)\nabla \Delta V + \gamma_1\nabla\theta=0,\\
& \theta_t - \beta^2 \Delta\theta + \gamma_2\Delta V_t = 0.
\end{align}
\end{subequations}
We can write this as a first order $3\times3$ pseudo-differential system of a particular structure. We choose for this as new unknown $W=(\sqrt{-\Delta}V_t, \Delta V, \theta)^T$ such that
\begin{equation}
\frac{\mathrm d}{\mathrm d t} W = \begin{pmatrix}
    	0&\mathcal R^{-1} (2\mu+\lambda) \nabla & \mathcal R^{-1}\gamma_1 \nabla  \\
	-\sqrt{-\Delta} & 0 & 0 \\
	 \gamma_2 \sqrt{-\Delta} & 0 & \beta^2\Delta 
   \end{pmatrix} W =  A(t,x,\mathrm D)W,
\end{equation}
with $\mathcal R: \sqrt{-\Delta} V \mapsto \nabla V$ the/a Riesz transform (cf. \cite[Chapter III]{Stein:1970} for $\Omega=\R^n$ or use any invertible pseudo-differential operator with principal symbol $\xi/|\xi|$ if $\Omega\ne\R^n$). The symbol $A(t,x,\xi)$ belongs to the class $C^\infty(\R, S^{2}_{1,0}(\mathrm T^*\Omega'))$, but the only second order entry is at the lower right corner. 

An adaptation of the two-step scheme of \cite{Wang:2003} allows one to construct an elliptic matrix-valued pseudo-differential operator $M\in C^\infty(\R,S^{0}_{1,0}(\mathrm T^*\Omega'))$, such that
\begin{equation}
   \left(\frac{\mathrm d}{\mathrm dt} - A(t,x,\xi)\right)\sharp M(t,x,\xi) = M(t,x,\xi) \sharp \left(\frac{\mathrm d}{\mathrm dt} - A_1(t,x,\xi)\right)
\end{equation}
holds modulo $C^\infty(\R,S^{-\infty}_{1,0}(\mathrm T^*\Omega'))$, where $A_1=\bdiag(A_{11},a_{12})$ consists of a $2\times 2$-block $A_{11}\in C^\infty(\R,S^{1}_{1,0}(\mathrm T^*\Omega'))$ and a scalar entry $a_{12}\in C^\infty(\R,S^2_{1,0}(\mathrm T^*\Omega'))$. The $2\times 2$ block is strictly hyperbolic, and can be studied further by diagonalisation, i.e.,  there exists a second diagonaliser $N(t,x,\xi)\in C^\infty(\R,S^{0}_{1,0}(\mathrm T^*\Omega'))$ such that
\begin{equation}
   \left(\frac{\mathrm d}{\mathrm dt} - A_1(t,x,\xi)\right)\sharp N(t,x,\xi) = N(t,x,\xi) \sharp \left(\frac{\mathrm d}{\mathrm dt} - A_2(t,x,\xi)\right)
\end{equation} 
holds modulo $C^\infty(\R,S^{-\infty}_{1,0}(\mathrm T^*\Omega'))$, with a diagonal matrix $A_2=\diag(\tilde a_{21},\tilde a_{22}, a_{12})$  having entries $\tilde a_{2i}\in C^\infty(\R,S^{1}_{1,0}(\mathrm T^*\Omega'))$, $i=1,2$, and $a_{12}$ as above. Knowing the principle part of these scalar symbols allows to describe the propagation of singularities.

%For further details, see \cite{Wang:2003}, \cite{Wang:2003a}.

\subsection{Dispersive estimates and diffusive structure for thermo-elastic models}\label{sec:3.4}
Multi-step schemes have been developed in \cite{Jachmann:2008} in order to study models of thermo-elasticity with additional terms of lower order. They have been used in \cite{Wirth:2007b}, \cite{Wirth:2007c} and will also be central to the understanding of the generalisation of these results to higher dimensions. 

Again we will provide an application. It is taken from \cite{Jachmann:2008} and concerned with the derivation of dispersive estimates for one-dimensional thermo-elastic systems with terms of lower order. 
We restrict ourselves to one model, classical one-dimensional thermo-elasticity with an additional damping term:
\begin{subequations}\label{eq:3.36}
\begin{align}
   &u_{tt} - \tau^2 u_{xx} + \gamma_1\theta_x +m u_t =0, \\
   &\theta_t-\kappa\theta_{xx} +\gamma_2 u_{tx}=0, 
\end{align}
\end{subequations}
together with initial conditions $u(0,\cdot)=u_1$, $u_t(0,\cdot)=u_2$ and $\theta(0,\cdot)=\theta_0$. 
Parameters $\tau$, $\kappa$, $\gamma_1$, $\gamma_2$ and $m$ are assumed to be positive constants. Applying a partial Fourier transform with respect to the $x$-variable reduces the system to a system of ordinary differential equations, which can be written as system of first order in $V=(\hat u_+,\hat u_-,\hat\theta)^T$, $\hat u_\pm = \hat u_t \pm\mathrm i\tau\xi\hat u$. A short calculation gives
\begin{equation}
  \frac{\mathrm d}{\mathrm dt} V = A(\xi)V = \big( A_0 + \xi A_1 + \xi^2 A_2\big) V,
\end{equation}
with matrices
\begin{align}
  A_0 = \frac12 \begin{pmatrix} -m&-m&0\\-m&-m&0\\0&0&0 \end{pmatrix}, \qquad
  A_1 =\mathrm i \begin{pmatrix}\tau & 0 &\gamma_1 \\0 & -\tau& \gamma_1\\
 \frac{\gamma_2}2 & \frac{\gamma_2}2 & 0 \end{pmatrix}, \qquad
  A_2 = \begin{pmatrix} 0&0&0\\0&0&0\\0&0& -\kappa\end{pmatrix}.
\end{align}
If we assume for simplicity that $A(\xi)$ has no multiple eigenvalues, we can represent solutions to this system as a sum
\begin{equation}
   V (t,\xi) = \sum_{\nu(\xi)\in\spec A(\xi)} \mathrm e^{t \nu(\xi)} P_{\nu(\xi)} V_0
\end{equation}
over the spectrum of the matrix $A(\xi)$. In order to understand properties of solutions such as dispersive estimates or descriptions of asymptotic profiles it is enough to calculate the eigenvalues $\nu(\xi)$,
or at least to describe their main properties. 

This can be done in three steps. First, a consideration of the characteristic polynomial of $A(\xi)$ implies that no purely imaginary eigenvalues occur for $\xi\ne0$. Thus solutions with bounded frequencies away from $0$ either decay exponentially or increase exponentially. In a second step one can diagonalise and determine asymptotic expansions of eigenvalues (and eigenprojections) as $\xi\to0$ and $\xi\to\infty$. This proves that $\mathrm{Re}\,\nu(\xi)\le 0$ everywhere and gives enough information about the behaviour of the eigenvalues as $\xi\to0$ to determine decay estimates in a third step. 

We will not give the precise calculations, for which see \cite{Jachmann:2008}. It is easily seen that $A(\xi)$ is non-degenerate of order $1$ as $\xi\to\infty$, and, following the first two steps of the procedure from Section~\ref{sec2.2}, the eigenvalues have asymptotic expansions of the form
\begin{align}
   \nu_{par}(\xi) &= -\kappa \xi^2 + \frac{\gamma_1\gamma_2}2 + \mathcal O(\xi^{-1}),\\
   \nu_{hyp,\pm}(\xi) & = \pm \mathrm i \tau\xi - \frac{\gamma_1\gamma_2}{2\kappa} + \mathcal O(\xi^{-1}).
\end{align}
Thus, their real part is uniformly negative for large $\xi$, which implies exponential decay for large frequencies. It remains to consider $\xi\to0$. This calculation has already been done in Section~\ref{sec2.2a}, where it was also seen that $A(\xi)$ is non-degenerate of order 2 (under the above restrictions on the parameters). It follows that the three eigenvalues satisfy
\begin{equation}
   \nu_{0}(\xi) = -m  + \lambda_0 \xi^2 + \mathcal O(\xi^{3}),\qquad
   \nu_{\pm}(\xi)  =  - \lambda_\pm \xi^2 + \mathcal O(\xi^{3}),
\end{equation}
with positive constants $\lambda_0$, $\lambda_\pm$ depending on the given parameters. Thus,
one mode also leads to exponential decay, while the other two resemble a parabolic-type behaviour close to the corresponding heat equations $v_t = \lambda_\pm v_{xx}$. 

The above mentioned properties of the eigenvalues of $A(\rho)$ in combination with boundedness properties of the Fourier transform and H\"older inequality imply directly the following a-priori estimate. Again the detailed proof can be found in \cite{Jachmann:2008}.

\begin{thm} 
The solutions to the the system \eqref{eq:3.36} satisfy the a-priori estimate
\begin{equation}
   \| (u_t, u_x, \theta)(t,\cdot) \|_q \le C (1+t)^{-\frac12(\frac1p-\frac1q)} \big( \|u_1\|_{H^{p,r+1}} + 
   \| u_2,\theta_0\|_{H^{p,r}}\big)  
\end{equation}
for all indices $1\le p\le 2\le q\le \infty$ and $r>(1/p-1/q)$. 
\end{thm}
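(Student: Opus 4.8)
The plan is to derive the $L^p\to L^q$ estimate by passing through the Fourier side, splitting the frequency domain into three regimes (bounded away from zero, small, and large), and treating each with the spectral information already collected. First I would use the representation
\begin{equation}
  V(t,\xi) = \sum_{\nu(\xi)\in\spec A(\xi)} \mathrm e^{t\nu(\xi)} P_{\nu(\xi)} V_0
\end{equation}
together with the fact, recalled above, that $\mathrm{Re}\,\nu(\xi)<0$ for all $\xi\ne0$, while on any compact set of non-zero frequencies the real parts are bounded above by a strictly negative constant; this yields exponential decay of the intermediate-frequency contribution in any $L^q$ norm, with no loss of derivatives. For the high-frequency zone $|\xi|\ge N$ the asymptotic expansions $\nu_{par}(\xi)=-\kappa\xi^2+\mathcal O(1)$ and $\nu_{hyp,\pm}(\xi)=\pm\mathrm i\tau\xi-\tfrac{\gamma_1\gamma_2}{2\kappa}+\mathcal O(\xi^{-1})$, valid by non-degeneracy of order $1$ as $\xi\to\infty$, again give uniformly negative real parts, hence exponential decay; here the eigenprojections $P_{\nu(\xi)}$ are uniformly bounded (by Proposition~\ref{prop:23} applied in the $\xi\to\infty$ scheme), so this piece is controlled in $L^2$ and, after the Sobolev weight $\langle\xi\rangle^r$ is absorbed into the exponential, maps $H^{2,r}$ boundedly with exponential-in-$t$ decay.

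The heart of the matter is the low-frequency zone $|\xi|\le\delta$, where polynomial decay rather than exponential decay is the best one can hope for and where the precise shape of the eigenvalues matters. Using the $\xi\to0$ expansions obtained in Section~\ref{sec2.2a},
\begin{equation}
  \nu_0(\xi) = -m+\lambda_0\xi^2+\mathcal O(\xi^3),\qquad \nu_\pm(\xi) = -\lambda_\pm\xi^2+\mathcal O(\xi^3),
\end{equation}
the $\nu_0$-mode contributes $\mathrm e^{-mt}$ times a bounded multiplier and is therefore again exponentially small. The two surviving modes behave like the heat semigroup: $\mathrm e^{t\nu_\pm(\xi)}$ is, up to a harmless factor $\mathrm e^{\mathcal O(t|\xi|^3)}$ which is bounded on $|\xi|\le\delta$ for $\delta$ small (one should in fact refine this to $|\xi|\lesssim t^{-1/2}$ where the cubic error is genuinely negligible, and observe separately that on $t^{-1/2}\lesssim|\xi|\le\delta$ the quadratic term alone already forces rapid decay), comparable to $\mathrm e^{-ct\xi^2}$. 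The standard Fourier-analytic estimate for the heat kernel — bounding $\|\mathcal F^{-1}(\mathrm e^{-ct|\xi|^2}\langle\xi\rangle^{?}\,\widehat g)\|_q$ via Hausdorff--Young and Hölder on the multiplier — gives the factor $(1+t)^{-\frac12(\frac1p-\frac1q)}$ in one space dimension, and the number of derivatives $r>\frac1p-\frac1q$ is exactly what is needed to make the relevant $\|\langle\xi\rangle^r\mathrm e^{-ct\xi^2}\|_{L^{p'}}$ (or the corresponding Littlewood--Paley sum over the high-frequency dyadic pieces) finite; the extra derivative on $u_1$ reflects the weight $\sqrt{-\Delta}$ built into the first component $\hat u_+=\hat u_t\pm\mathrm i\tau\xi\hat u$ of $V$.

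To assemble the final bound I would note that $(u_t,u_x,\theta)$ is recovered from $V$ by bounded (zero-order, away from $\xi=0$) Fourier multipliers plus, near $\xi=0$, a factor $\xi$ on the $u_x$-component which only improves the low-frequency behaviour; then I would translate each of the three zonal estimates back to physical space, sum them, and use that the worst decay rate among the three pieces is the low-frequency parabolic one. The main obstacle is the low-frequency analysis: one must handle the cubic remainder $\mathcal O(\xi^3)$ in the exponent uniformly in $t$ (the split at $|\xi|\sim t^{-1/2}$), keep careful track of which components of $V$ carry which powers of $\xi$ so as to pin down the precise Sobolev exponents $r$ and $r+1$, and verify that the eigenprojections $P_{\nu_\pm(\xi)}$, whose entries may blow up like $\xi^{-1}$ near the degeneration (the matrix $A(\xi)$ is degenerate of order $2$ at $\xi=0$), still combine with the $\xi$-factors from $\hat u_\pm$ and from $u_x$ to give bounded multipliers. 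Everything else — the intermediate and high-frequency zones, and the passage through Hausdorff--Young and Hölder — is routine. The detailed computation is carried out in \cite{Jachmann:2008}.
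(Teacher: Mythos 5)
Your proposal follows the same route the paper sketches (and then defers to \cite{Jachmann:2008} for the full computation): split the frequency line into low, intermediate and high zones, use the eigenvalue asymptotics at $\xi\to 0$ and $\xi\to\infty$ from Sections~\ref{sec2.2a} and~\ref{sec:3.4} to get parabolic-type decay from the modes $\nu_\pm(\xi)\sim-\lambda_\pm\xi^2$ and exponential decay from everything else, and then convert to an $L^p\to L^q$ bound by Hausdorff--Young and H\"older on the resulting Fourier multiplier, with the regularity $r>\tfrac1p-\tfrac1q$ making the high-frequency multiplier integrable and the shift $r\mapsto r+1$ on $u_1$ coming from the factor $\mathrm i\tau\xi\hat u$ in the definition of $\hat u_\pm$. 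One correction worth noting: since $A(\xi)$ is non-degenerate of order $2$ at $\xi=0$, the block-diagonalisation scheme delivers a \emph{uniformly bounded} diagonaliser with uniformly bounded inverse near $\xi=0$ (this is exactly what Section~\ref{sec2.2a} constructs, and it is the point of the preceding propositions), so the eigenprojections $P_{\nu_\pm(\xi)}$ are already bounded as $\xi\to 0$ and need no cancellation against the $\xi$-factors in the data --- avoiding the $\xi^{-1}$ blow-up of the naive eigenprojection formula is precisely the payoff of the multi-step scheme, not an extra obstacle you have to overcome.
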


\bibliographystyle{plain}
%\bibliography{../../../Bibliography/database-NEW}

\end{document}